\crefname{lemma}{lemma}{lemmata}
\Crefname{lemma}{Lemma}{Lemmata}
\crefname{subsection}{subsection}{subsections}
\Crefname{subsection}{Subsection}{Subsections}
\newtheorem{theorem}{Theorem}[section]
\newtheorem{lemma}[theorem]{Lemma}
\newtheorem{proposition}[theorem]{Proposition}
\newtheorem{corollary}[theorem]{Corollary}
\theoremstyle{definition}
\newtheorem{definition}[theorem]{Definition}
\newtheorem{remark}[theorem]{Remark}
\newtheorem{example}[theorem]{Example}
\newcommand{\mb}[1]{\mathbb{#1}}
\newcommand{\C}{\mb{C}} 
\newcommand{\Z}{\mb{Z}}
\newcommand{\Q}{\mb{Q}}
  \newcommand{\rk}{\mathrm{rank}}
\newcommand{\Ch}{{\mathfrak{ C}}}
\newcommand{\Oh}{{\mathcal O}}
\newcommand{\Syz}{\mathrm{Syz}}
\newcommand{\cE}{\mathcal{E}}
\newcommand{\cX}{\mathcal{X}}
\newcommand\restr[2]{{
  \left.\kern-\nulldelimiterspace 
  #1 
  \vphantom{\big|} 
  \right|_{#2} 
  }}
\newcommand\restrst[2]{{
  \left.\kern-\nulldelimiterspace 
  #1 
  \vphantom{\big|} 
  \right|^*_{#2} 
  }}
\title{Strongly semistable reduction of syzygy bundles on plane curves}
\author[M.~A.~Hahn]{Marvin Anas Hahn}
\address{M.~A.~Hahn: Institut für Mathematik, Goehte-Universität Frankfurt, Robert-Mayer-Str. 6-8, 60325 Frankfurt am Main}
\email{hahn@math.uni-frankfurt.de}
\author[A.~Werner]{Annette Werner}
\address{A.~Werner: Institut für Mathematik, Goehte-Universität Frankfurt, Robert-Mayer-Str. 6-8, 60325 Frankfurt am Main}
\email{werner@math.uni-frankfurt.de}
\keywords{Models of curves, degeneration of vector bundles, $p-$adic Simpson correspondence, Mustafin varieties}
\subjclass[2010]{14H60,14G20,11G20}
\begin{document}
\begin{abstract}
We investigate degenerations of syzygy bundles on plane curves over $p$-adic fields. We use Mustafin varieties which are degenerations of projective spaces 
to find a large family of models of  plane curves over the ring of integers such that the special fiber consists of multiple projective lines meeting in one point. On such models we investigate vector bundles whose generic fiber is a syzygy bundle and which become trivial when restricted to each projective line in the special fiber. Hence these syzygy bundles have strongly semistable reduction. This investigation is motivated by the fundamental open problem in $p$-adic Simpson theory to determine the category of Higgs bundles corresponding to continuous representations of the \'etale fundamental group of a curve. Faltings' $p$-adic Simpson correspondence  and work of Deninger and the second author shows that bundles with Higgs field zero and potentially strongly semistable reduction fall into this category. 
Hence the results in the present paper determine a class of syzygy bundles on plane curves giving rise to a $p$-adic local system.  We apply our methods to a concrete example on the Fermat curve suggested by Brenner and prove that this bundle has potentially strongly semistable reduction. 
\end{abstract}
\maketitle

\section{Introduction}
The classical Simpson correspondence in dimension one establishes a correspondence between semistable degree zero Higgs bundles on a Riemann surface $X$ and representations of its topological fundamental group  \cite{simpson1990nonabelian,simpson1992higgs}. In recent years considerable progress has been made towards a similar result in the $p-$adic case \cite{dewe1,dewe4,fa,abbes2016p, liuzhu}. In \cite{fa}, Faltings proved an equivalence of categories between Higgs bundles on a $p-$adic curve and so-called \textit{generalized representations} of its étale fundamental group. A detailed and systematic treatment of the local theory is provided by \cite{abbes2016p}. Recently Liu and Zhu \cite{liuzhu} have  established a Riemann-Hilbert functor on rigid analytic varieties which yields part of a $p$-adic Simpson correspondence, namely a tensor functor from the category of \'etale $\Q_p$-local systems to the category of nilpotent Higgs bundles.  

\par 
An approach for Higgs bundles with trivial Higgs fields, which may be seen as $p$-adic analog of the classical Narasimhan-Seshadri correspondence, was introduced by Deninger and the second author. It is compatible with Faltings' functor by \cite{xu}. In \cite{dewe1, dewe4} it is shown that a semistable vector bundle on a proper, smooth $p$-adic curve $X$ which has strongly semistable reduction of degree zero after pullback to a finite covering of the curve  admits $p$-adic parallel transport and hence gives rise to a continuous representation of the \'etale fundamental group. See also \cite{ dewe3} for a more detailed analysis of the Tannaka groups involved. In  \cite{dewe5} this result is generalized to bundles with numerically flat reduction on $p$-adic varieties of any dimension. 

In order to obtain a complete picture of a  $p$-adic Simpson theory on curves, we have to determine the category of vector bundles  on a $p$-adic curve which is equivalent to the category of  continuous representations of the \'etale fundamental group. One is of course tempted to speculate that  it is the category of semistable Higgs bundles of degree zero. By the results of Deninger and the second author a positive answer for semistable degree zero bundles with trivial Higgs fields can be given if we  prove a potentially strongly semistable reduction theorem for all such bundles. This involves finding suitable models of both the curve and the vector bundle and is therefore, as might be expected, a difficult task.\par 

Since we know that all line bundles of degree zero lie in the category of degree zero bundles with potentially strongly semistable reduction, and since this category is closed under extensions, the first interesting case is provided by  stable rank two bundles of degree zero. Here 
Holger Brenner \cite{br2} has  given a concrete example of a stable rank two syzygy bundle on a Fermat curve which does not have strongly semistable reduction in the obvious way, i.e. by simply degenerating the given equations for bundle and curve which are defined over the ring of integers.\par

This motivates the study of szygy bundles on plane curves over discretely valued fields as a first step. The present paper proposes the use of Mustafin varieties to define suitable models of plane curve. Mustafin varieties are flat degenerations of projective spaces induced by a finite choice of invertible matrices, which have been introduced and studied in \cite{mustafin1978nonarchimedean,cartwright2011mustafin, hahn2017mustafin} 
Our first main result \cref{thm:spec} 
shows that for sufficiently general choice of these invertible matrices, and if the residue field is big enough, the model of a plane curve obtained taking the closure in the Mustafin variety has
star-like reduction, i.e. its special fiber consists of multiple projective lines meeting in one point. It turns out (see \cref{lem:step1}) that it suffices to arrange the situation in such a way that all irreducible components of the model of the curve are contained in primary components of the Mustafin variety. 

Then we attack the problem of extending syzygy bundles on projective planes to sheaves on Mustafin varieties. We define such an extension which mixes information from the different primary components and investigate the locus where it is locally free. If the equations of the syzygy bundle satisfy a certain condition the restriction of this sheaf will be locally free on the model of the curve with strongly semistable special fiber, as we show in our second main result \cref{thm:plane}.

 Finally, we apply this result to Brenner's example. i.e. to the  bundle $Syz(x^2,y^2,z^2)$ on a Fermat curve $X$. We show in \cref{thm:bren} that there exists a (ramified) covering of $X$, such that the pullback of $Syz(x^2,y^2,z^2)$ to this cover has a model with strongly semistable reduction.  Since we have to choose the covering and the model at the same time, we have to adapt the arguments of \cref{thm:plane} to this situation. Therefore, Brenner's example  $Syz(x^2,y^2,z^2)$ from  \cite{br2} admits indeed \'etale parallel transport since there exists  a finite cover of the Fermat curve where its pullback has strongly semistable reduction of degree zero.\par
 Our method shows that the degree of freedom provided in the choice of models for the curve and the bundle is quite big. In particular, our results strengthen the hope that every semistable vector bundle of degree zero on a $p$-adic curve with Higgs field zero participates in the $p$-adic Simpson correspondence.\par  

\subsection{Structure of this paper}
In \cref{sec:pre}, we recall some background regarding Mustafin varieties, syzygy bundles and semistability of vector bundles. 
In \cref{sec:mus}, we construct models of plane curves with star-like reduction and prove \cref{thm:spec}. In \cref{sec:plane}, we state and and prove our second main \cref{thm:plane} that given a model with star-like reduction, a certain class of syzygy bundles can be extended to vector bundles with strongly semistable reduction of degree zero. Finally, we combine our methods in \cref{sec:brenner} to show that $Syz(x^2, x^2, z^2)$ does admit $p$-adic \'etale parallel transport by providing a finite cover of the Fermat curve where it has strongly semistable reduction of degree zero.

\subsection*{Acknowledgements} The authors gratefully acknowledge support of the LOEWE research unit Uniformized Structures in Arithmetic and Geometry. Many computations for this projects were aided by \textsc{Singular} \cite{DGPS}.

\section{Preliminaries}
\label{sec:pre}
In this section, we recall some background needed for this work. For the rest of this paper, we fix a discretely valued field $K$ with ring of integers denoted by $R$ or $R_K$, maximal ideal $\mathfrak{m}$ and perfect residue field $k$. Let $t$ be a fixed uniformizer of $R$. 

\subsection{Mustafin varieties}
\label{sec:must}
We introduce the basic notions surrounding Mustafin varieties, for more details, we refer to \cite{cartwright2011mustafin,hahn2017mustafin}. Let $V$ be a vector space of dimension $d$ over $K$. We define $\mathbb{P}(V)=\mathrm{Proj}\mathrm{Sym}(V^*)$ as the projective space parameterising lines in $V$. We call free $R-$modules $L\subset V$ of rank $d$ \textit{lattices} and define $\mathbb{P}(L)=\mathrm{Proj}\mathrm{Sym}(L^*)$, where $L^*=\mathrm{Hom}_R(L,R)$. Note that we will mostly consider lattices up to homothety, i.e. $L\backsim L'$ if $L=c\cdot L'$ for some $c\in K^{\times}$.

\begin{definition}
\label{def:musta}
Let $\Gamma=\{L_1,\dots,L_n\}$ be a set of rank $d$ lattice classes in $V$. Then $\mathbb{P}(L_1),\dots,\allowbreak\mathbb{P}(L_n)$ are projective spaces over $R$ whose generic fibers are canonically isomorphic to $\mathbb{P}(V)\simeq\mathbb{P}^{d-1}_{K}$. The open immersions
\begin{equation}
\mathbb{P}(V)\hookrightarrow\mathbb{P}(L_i)
\end{equation}
give rise to a map
\begin{equation}
\mathbb{P}(V)\longrightarrow\mathbb{P}(L_1)\times_R\dots\times_R\mathbb{P}(L_n).
\end{equation}
We denote the closure of the image endowed with the reduced scheme structure by $\mathcal{M}(\Gamma)$. We call $\mathcal{M}(\Gamma)$ the \textit{associated Mustafin variety}. Its special fiber $\mathcal{M}(\Gamma)_k$ is a reduced scheme over $k$ by \cite[Theorem 2.3]{cartwright2011mustafin}.\end{definition}

Let $L=Re_1+\dots+Re_d$ be a reference lattice.  By the following procedure, we choose coordinates on $\mathbb{P}(L_1)\times_R\dots\times_R\mathbb{P}(L_n)$: Let $g_i\in\mathrm{PGL}(V)$, such that $g_iL=L_i$. We consider the commutative diagram
\begin{equation}
\begin{tikzcd}
\mathbb{P}(V)\arrow{rr}{(g_1^{-1},\dots,g_n^{-1})\circ\Delta} \arrow{d} &  & \mathbb{P}(V)^n\arrow{d}\\
\prod_{R}\mathbb{P}(L_i) \arrow{rr}{(g_1^{-1},\dots,g_n^{-1})} &  & \mathbb{P}(L)^n.
\end{tikzcd}
\end{equation}
Let $x_1,\dots,x_d$ be the coordinates on $\mathbb{P}(L)$ and consider the projections
\begin{equation}
P_j:\mathbb{P}(L)^n\to\mathbb{P}(L)
\end{equation}
to the $j-$th factor. Then, we denote $x_{ij}=P_j^*x_i$ and observe that the Mustafin variety $\mathcal{M}(\Gamma)$ is isomorphic to the subscheme of $\mathbb{P}(L)^n$ cut out by
\begin{equation}
I_2\begin{pmatrix}
g_1\begin{pmatrix}
x_{11}\\
\vdots\\
x_{d1}
\end{pmatrix} &
\cdots
& g_n\begin{pmatrix}
x_{1n}\\
\vdots\\
x_{dn}
\end{pmatrix}
\end{pmatrix}\cap R[(x_{ij})].
\end{equation}
By
\begin{equation}
p_j=\restr{P_j}{\mathcal{M}(\Gamma)}:\mathcal{M}(\Gamma)\hookrightarrow\mathbb{P}(L)^n\to\mathbb{P}(L)
\end{equation}
we denote the projection to the $j-$th component. We write $x_{ij}$ also for the induced rational function on $\mathcal{M}(\Gamma)$. By \cite[Corollary 2.5]{cartwright2011mustafin}, for each $i$ there exists a unique irreducible component $X$ of $\mathcal{M}(\Gamma)_k$ which maps birationally onto $\mathbb{P}(L)_k$ via the map on the special fiber induced by $p_i$ We call $X$ the $i-$\textit{th primary component} of $\mathcal{M}(\Gamma)_k$.

\subsection{Syzygy bundles}
We consider syzygy sheaves on the projective space which are the kernel of a morphism to the structure sheaf. To be precise, let $f_1, \ldots, f_{n+1}$ be homogeneous polynomials in $K[x_1,\dots,x_N]$ with degrees $d_1, \ldots, d_{n+1}$. Then the corresponding syzygy sheaf $\Syz(f_1, \ldots, f_{n+1})$ on $\mathbb{P}_K^{N-1}$ is defined as the kernel

\[ 0 \longrightarrow \Syz(f_1, \ldots, f_{n+1}) \longrightarrow \bigoplus_{i=1}^{n+1} \Oh(-d_i) \xrightarrow{(f_1, \ldots, f_{n+1})}\mathcal{O}.\]
The sheaf $\Syz(f_1, \ldots, f_{n+1})$ is locally free on $\bigcup D_+(f_i)$. 

In this work, we will be concerned with vector bundles of degree zero on curves. Therefore, we consider the twisted sheaves $\Syz(f_1, \ldots, f_{n+1})(\rho)$ when $\sum d_i=n\rho$.

\begin{remark}
We note that usually a coherent sheaf $\mathcal{F}$ on $X$ is called a $k-$\textit{th syzygy sheaf} if for each $x\in X$, there exist an open neighbourhood $U$ of $x$, locally free sheaves $\mathcal{G}_1,\dots,\mathcal{G}_k$ on $U$ and an exact sequence
\begin{equation}
0\to\restr{\mathcal{F}}{U}\to\mathcal{G}_1\to\dots\to\mathcal{G}_k.
\end{equation}
\end{remark}

Thus the sheaf $\Syz(f_1, \ldots, f_{n+1})$ is a second syzygy sheaf.

\subsection{Semistability of vector bundles}
Recall that a vector bundle $E$  on  a smooth, projective and connected curve $C$ over a field $\kappa$ is semistable (respectively stable), if for all proper non-zero subbundles $F$ of $E$ the inequality $\deg (F)/\rk(F)  \le \deg (E)/\rk(E)$ (respectively $\deg (F)/\rk(F) < \deg (E)/\rk(E) $) holds.

If $\kappa$ has positive characteristic, semistability has weaker properties than in characteristic zero, since this property may be lost under pullback by inseparable morphism. This explains the following notion of strong semistability.

Assume that $\mbox{char} (\kappa) = p>0$, and let
$F : C \to C$ be the absolute Frobenius morphism, defined by the $p$-power map
on the structure sheaf. 
Then a vector bundle $E$ on $C$ is called strongly semistable, 
if $F^{n*} E$ is semistable on $C$ for all $n
\ge 1$.

\begin{definition}
\label{def:strong}
Let $E$ be a vector bundle on a one-dimensional proper scheme $C$ over a field $\kappa$ of characteristic $p$. Then $E$ is called strongly semistable of degree zero, if the pullback of $E$ to all normalized irreducible components of $C$ is strongly semistable of degree zero.
\end{definition}

\subsection{Parallel transport for $p$-adic vector bundles}
\label{sec:parallel}
Consider a smooth, projective and connected curve $C$ over $\overline{\mathbb{Q}}_p$, and denote by $C_{\C_p}$ the base change to the field $\C_p$ (which is the completion of the algebraic closure $\overline{\mathbb{Q}}_p$). By $\mathfrak{o}$ we denote the ring of integers of $\C_p$. Its residue field is isomorphic to $\overline{\mathbb{F}}_p$. We call every finitely presented, flat and proper $\overline{\Z}_p$-scheme $\mathfrak{C}$ with generic fiber $C$ a model of $C$.

\begin{definition}
\label{def:strongred}
A vector bundle $E$ on $C_{\C_p}$ has strongly semistable reduction of degree zero, if there exists a model $\Ch$ of $C$ and a vector bundle $\mathcal{E}$ on $\Ch_{\mathfrak{o}} = \Ch \otimes_{\overline{\mathbb{Z}}_p} \mathfrak{o}$ such that $\mathcal{E}$ has generic fiber $E_{\mathbb{C}_p}$ and such that the special fiber  $\mathcal{E}_{\overline{\mathbb{F}}_p}$ of $\mathcal{E}$ is strongly semistable of degree zero on the one-dimensional proper scheme $\Ch \otimes_{\overline{\mathbb{Z}}_p} \mathbb{F}_p$ in the sense of  \cref{def:strong}.
\end{definition}

In \cite{dewe1} and \cite{dewe3}, a theory of parallel transport along \'etale paths is defined for those vector bundles $E$ of degree zero on $C_{\C_p}$ for which there exists a finite, \'etale covering $\alpha: C' \rightarrow C$ such that the  bundle $\alpha_{\C_p}^\ast E$ on $C'_{\C_p}$ has strongly semistable reduction of degree zero. We note that if $E$ has strongly semistable reduction of degree zero, then $E$ is semistable of degree zero \cite[Theorem 13]{dewe1}.

\begin{definition}
\label{def:potstrongred}
A vector bundles $E$ of degree zero on $C_{\C_p}$ has \textit{potentially strongly semistable reduction} if there exists a finite (not necessarily \'etale) covering $\alpha: C' \rightarrow C$ such that the bundle $\alpha^\ast_{\C_p} E$ on $C'_{\C_p}$ has strongly semistable reduction.
\end{definition}

It is an important open question if all semistable bundles of degree zero on $C_{\C_p}$ have potentially strongly semistable reduction in this sense. In fact,  \cite[Theorem 10]{dewe4} implies that all bundles with potentially strongly semistable reduction admit $p$-adic parallel transport. Hence, using \cite{xu} and \cite{fa}, a positive answer to this question would imply that all semistable bundles of degree zero on $C_{\C_p}$ with trivial Higgs field correpond to $p$-adic representations of the \'etale fundamental group under the $p$-adic Simpson correspondence, which would represent a big step in the directon of a $p$-adic result which is analogous to the classical Simpson correspondence.

\section{Mustafin degenerations of plane curves}
\label{sec:mus}
In this section, we construct models of plane curves using Mustafin varieties. We begin by choosing a specific Mustafin variety. As we are only concerned with plane curves, we focus on the following situation:\par 
We denote by $\Delta$ the open subvariety of the affine space $\mathbb{A}_{\mathbb{Z}}^{9(n+1)}$ given by all points  $\underline{x}=\left(x_{ij}^{(l)}\right)_{i,j=1,2,3;l=1,\dots,n+1}$ such that $\det \left(x_{ij}^{(l)}\right)_{i,j} $ is invertible for all $l$. 

Let $\underline{a}=\left(a_{ij}^{(l)}\right)_{i,j=1,2,3;l=1,\dots,n+1}\in \Delta(K)$ be some tuple  of matrix coefficients, and put $V=K^3$. We choose the standard basis $e_1,e_2,e_3$ of $K^3$ and set $L=R e_1+R e_2+R e_3$ as the reference lattice. 
We define
\begin{equation}
M_l=\begin{pmatrix}
a_{11}^{(l)} & a_{12}^{(l)} & a_{13}^{(l)}\\
a_{21}^{(l)} & a_{22}^{(l)} & a_{23}^{(l)}\\
a_{31}^{(l)} & a_{32}^{(l)} & a_{33}^{(l)}
\end{pmatrix}
\end{equation}
and
\begin{equation}
g_l=M_l\begin{pmatrix}
1 & 0 & 0\\
0 & t & 0\\
0 & 0 & t^2
\end{pmatrix}
\end{equation}
for $l=1,\dots,n+1$.

This gives us the lattices $L_i=g_iL$ and the set $\Gamma=\{L_1,\dots,L_{n+1}\}$. We denote the corresponding Mustafin variety in $ \mathbb{P}(L)^{n+1}$ by $\mathcal{M}_{\underline{a}}(\Gamma)$. Let $C \subset\mathbb{P}^2_{K}$ be an irreducible plane curve. We embed it into $\mathcal{M}_{\underline{a}}(\Gamma)\subset \mathbb{P}(L)^{n+1}$ via
\begin{equation}
\label{equ:mustmod}
 C\subset\mathbb{P}^2_{K}\xrightarrow{(g_1^{-1},\dots,g_{n+1}^{-1})}\mathbb{P}(L)^{n+1}\end{equation}
and consider the closure of $C$ in $\mathcal{M}_{\underline{a}}(\Gamma)$ endowed with the reduced scheme structure. By the same considerations as in the proof of \cite[Theorem 2.3]{cartwright2011mustafin}, this yields a flat proper $R_{}-$scheme $\mathfrak{C}^{\underline{a}}$ with generic fiber $C$, which we also call a Mustafin model of $C$. We further denote its special fiber by $\mathfrak{C}^{\underline{a}}_{k}$ and the irreducible components of $\mathfrak{C}^{\underline{a}}_{k}$ by $C_1,\dots,C_m$. Note, that all $C_i$ are of dimension $1$ by \cite[32.19.2]{SP}.\par We write $D_i = C_i^{\mathrm{red}}$ for the corresponding reduced irreducible components. Our next aim to describe the irreducible components of the special fiber $\mathfrak{C}^{\underline{a}}_{k}$.
To begin with, we compute the components of the special fiber of the Mustafin variety.

\begin{definition}
\label{rem:general}
We say that a  condition holds for general elements $(a_{ij}^{(l)})_{i,j=1,2,3;l=1,\dots,n+1}\in R ^{9 (n+1)}$, if it holds for all elements in the preimage of  a non-empty Zariski open subset in ${k}^{9(n+1)}$ under the reduction map. In particular, a condition holding for general elements is generically true in $R^{9(n+1)}$.\par
Moreover, let $U\subset \mathbb{A}_K^{9(n+1)}$ be a non-empty Zariski open subset, then, possibly after replacing $K$ by a finite field extension,  $U(K) \cap R^{9(n+1)}$ contains the preimage of a non-empty Zariski open subset in $k^{9(n+1)}$, i.e. it contains a general subset. 
\end{definition}
 In the following, we compute the special fiber of the Mustafin variety considered above.

\begin{lemma}
\label{lem:prime}
Assume that the residue field $k$ is algebraically closed and $n\ge2$. 
For general $\underline{a} \in \Delta(R)$ the $l-$th primary component of $\mathcal{M}_{\underline{a}}(\Gamma)$ is cut out by the ideal
\begin{equation}
\label{equ:prim}
\langle \left(x_{1j},x_{2j}\right)_{j=1,\dots,n+1;j\neq l}\rangle
\end{equation}

Moreover, the secondary components are cut out by the ideal

\begin{equation}
\label{equ:sec}
\langle x_{1l},x_{1i},\left(x_{1j},x_{2j}\right)_{j=1,\dots,n+1;j\neq i,l}\rangle
\end{equation}
for $i,l=1,\dots,n+1,i\neq l$.
\end{lemma}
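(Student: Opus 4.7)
The plan is to analyze the defining $2\times 2$ minor ideal of $\mathcal{M}_{\underline{a}}(\Gamma) \subset \mathbb{P}(L)^{n+1}$ modulo the uniformizer $t$ and to identify its primary decomposition in the special fiber, exploiting the general position of the matrices $M_l$. Recall that the $l$-th column of the defining matrix $M$ equals $M_l(x_{1l}, tx_{2l}, t^2x_{3l})^T$, so modulo $t$ it reduces to $x_{1l}(a_{11}^{(l)}, a_{21}^{(l)}, a_{31}^{(l)})^T$. For general $\underline{a}$, every $2\times 2$ minor of $M$ therefore reduces to a nonzero scalar multiple of $x_{1l}\, x_{1l'}$ with $l\neq l'$, which already forces at most one $x_{1l}$ to be nonzero on each component of $\mathcal{M}_{\underline{a}}(\Gamma)_k$.

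To identify the $l$-th primary component with $\langle (x_{1j}, x_{2j})_{j \neq l}\rangle$, I would combine \cite[Corollary 2.5]{cartwright2011mustafin} with the explicit family of arcs $v = g_l w$ for generic $w \in L$. In the $l$-th factor this arc projects to $[w]$, dense in $\mathbb{P}(L)_k$; in the $j$-th factor with $j\neq l$, the vector $g_j^{-1} v = \mathrm{diag}(1, t^{-1}, t^{-2})\, M_j^{-1} M_l\, \mathrm{diag}(1,t,t^2)\, w$ has dominant contribution $(M_j^{-1}M_l)_{31}\, w_1 \cdot t^{-2}$, which is a unit for generic $\underline{a}$, so after rescaling by $t^2$ the reduction equals $[0:0:1]$. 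The closure of this family is the irreducible $2$-dimensional subscheme cut out by $\langle (x_{1j}, x_{2j})_{j\neq l}\rangle$, which by \cite[Corollary 2.5]{cartwright2011mustafin} is the $l$-th primary component.

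For the secondary components, I would produce, for each pair $i \neq l$, a $2$-parameter family of arcs from $\mathbb{P}(V)$ whose reductions densely cover the subscheme $Y_{i,l}$ cut out by $\langle x_{1l}, x_{1i}, (x_{1j}, x_{2j})_{j\neq i,l}\rangle$. Writing $N = M_i^{-1} M_l$, the arcs $v(t) = M_l\bigl(-N_{32}/N_{31} + ts,\, 1,\, tu\bigr)^T$ with parameters $s, u \in R$ achieve this: the chosen constant term of the first entry makes $(M_i^{-1}v)_3$ acquire positive $t$-valuation, suppressing the would-be $t^{-2}$ contribution in $g_i^{-1}v$ and leaving a generic reduction on $\{x_{1i}=0\}$. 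A direct valuation check then shows the arc reduces to a generic point of $\{x_{1l}=0\}$ in the $l$-th factor, to a generic point of $\{x_{1i}=0\}$ in the $i$-th factor, and to $[0:0:1]$ in all other factors. Since the image lies densely in $Y_{i,l}$ and $Y_{i,l}$ is not contained in any primary component (the second coordinate is generically nonzero in both the $l$-th and $i$-th factors), it is a $2$-dimensional irreducible component of $\mathcal{M}_{\underline{a}}(\Gamma)_k$.

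The main obstacle will be completeness: showing that the listed $n+1$ primary and $\binom{n+1}{2}$ secondary components exhaust all components of $\mathcal{M}_{\underline{a}}(\Gamma)_k$. I would handle this either by saturating $I_2(M) \bmod t$ with respect to $t$ and computing its primary decomposition, using the higher-order $t$-expansion of the $2 \times 2$ minors to extract the additional relations that eliminate spurious components; or by appealing to the combinatorial description of Mustafin special fibers from \cite{cartwright2011mustafin, hahn2017mustafin}, in which the components correspond to lattice points in the tropical convex hull of $\Gamma$ inside the Bruhat-Tits building of $\mathrm{PGL}_3(K)$, and then checking under genericity of $\underline{a}$ that this convex hull contributes exactly one additional lattice per unordered pair $\{i,l\}$.
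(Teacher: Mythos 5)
Your proposal is correct in substance but takes a genuinely different route from the paper. You work directly with the equations: reducing the $2\times 2$ minors modulo $t$ to get $x_{1l}x_{1l'}=0$, and then exhibiting each candidate component as the closure of the reductions of an explicit family of arcs ($v=g_lw$ for the primary components, and the two-parameter family $v=M_l(-N_{32}/N_{31}+ts,\,1,\,tu)^T$ for the secondary ones). Your valuation computations check out: in the $i$-th factor the secondary arc reduces to $(0:c:sN_{31}+N_{33}u)$ with $c$ a generically nonzero constant, which together with $(0:1:u)$ in the $l$-th factor sweeps out a dense subset of $\mathcal{Y}_{il}$, and since the special fiber is equidimensional of dimension $2$ these closures are genuine components. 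The paper instead avoids all arc computations by quoting the known decomposition for three lattices from \cite[Example 2.2]{cartwright2011mustafin} and using the compatibility of primary components with projections onto sub-Mustafin varieties \cite[Lemma 2.4]{cartwright2011mustafin} to show that each purported ideal is contained in the ideal of the corresponding component; since those ideals already cut out irreducible sets of the correct dimension, equality follows. Your method is more self-contained and makes the genericity conditions on $\underline{a}$ completely explicit; the paper's is shorter but leans harder on the cited results.

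The one place where your plan is materially heavier than necessary is the completeness step, which you rightly flag as the main obstacle. Neither of your two proposed strategies (explicit saturation of $I_2$ modulo $t$, or the tropical convex hull in the building) needs to be carried out: once you have exhibited $n+1+\binom{n+1}{2}=\binom{n+2}{2}$ pairwise distinct irreducible components, \cite[Theorem 2.3]{cartwright2011mustafin} says this is the maximal possible number of components of the special fiber of a Mustafin variety for $n+1$ lattices in a three-dimensional space, so there can be no others. This is exactly how the paper closes the argument, and it is available to you verbatim given what your arcs already establish.
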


\begin{proof}
Generalizing the observation \cite[Example 2.2]{cartwright2011mustafin}, we find that for general $\underline{a}$, the special fiber of $\mathcal{M}_{\underline{a}}(\{L_l,L_m,L_o\})$ is cut out by
\begin{align}
&\langle x_{1m},x_{2m},x_{1o},x_{2o} \rangle\cap\langle x_{1l},x_{2l},x_{1o},x_{2o}\rangle\cap\langle x_{1l},x_{2l},x_{1m}x_{2m}\rangle\\
&\langle x_{1l},x_{1m},x_{1o},x_{2o}\rangle\cap\langle x_{1l},x_{2l},x_{1m},x_{1o}\rangle\cap\langle x_{1l},x_{1o},x_{1m},x_{2m}\rangle.
\end{align}
We now consider the projection
\begin{align}
p_{mno}:\mathcal{M}_{\underline{a}}({\Gamma})\to\mathcal{M}_{\underline{a}}(\{{L}_m,{L}_n,{L}_o\})
\end{align}
on the $m-$th, $n-$th and $o-$th factor and note that the following diagram commutes
\begin{equation}
\begin{tikzcd}
\mathbb{P}^2_K \arrow{rr}{(g_1^{-1},\dots,g_{n+1}^{-1})} \arrow{d}{\mathrm{id}}&  & \mathcal{M}_{\underline{a}}(\Gamma) \arrow{dr}{p_m}\arrow{d}{p_{mno}}\\
\mathbb{P}^2_K \arrow{rr}{(g_m^{-1},g_n^{-1},g_{o}^{-1})} &  & \mathcal{M}_{\underline{a}}(L_m,L_n,L_o\})\arrow{r}{p_m} & \mathbb{P}(L)
\end{tikzcd}.
\end{equation}

Moreover by \cite[Lemma 2.4]{cartwright2011mustafin}, the $m-$th primary component of $\mathcal{M}_{\underline{a}}(\Gamma)_k$ projects onto the $m-$th primary component of $\mathcal{M}_{\underline{a}}(\{L_m,L_n,L_o\})_{k}$. Thus, the ideal of the $l-$th primary component of $\mathcal{M}_{\underline{a}}(\Gamma)_{k}$ contains the ideal
\begin{equation}
\langle x_{1m},x_{2m},x_{1o},x_{2o} \rangle
\end{equation}
for all $m,o=1,\dots,n+1$ and $m\neq l$ and $o \neq l$, i.e. it contains the ideal in \cref{equ:prim}. We observe that this ideal already cuts out a topological space isomorphic to $\mathbb{P}^2_{k}$. An analogous argument shows that the ideals in \cref{equ:sec} correspond to secondary components of $\mathcal{M}_{\underline{a}}(\Gamma)_{k}$. In order to see that these are all the components, we observe that we have produced $n+1+\binom{n+1}{2}=\binom{n+2}{2}$ irreducible components, which by \cite[Theorem 2.3]{cartwright2011mustafin} is the maximal number.
\end{proof}

\begin{definition}
We fix matrix coefficients $\underline{a}\in \Delta(K)$. Let $C\subset \mathbb{P}^2_{K}$ be an irreducible plane curve. We denote by $\mathfrak{C}^{\underline{a}}$ the Mustafin model of $C$ obtained via \cref{equ:mustmod}. We say $\mathfrak{C}^{\underline{a}}$ has \textit{star-like reduction} over $R$ if
\begin{itemize}
\item the special fiber $\mathfrak{C}^{\underline{a}}_k$ decomposes into $n+1$  irreducible components $C_1,\dots,C_{n+1}$;
\item the component $C_i$ is contained in the $i-$th primary component of $\mathcal{M}_{\underline{a}}(\Gamma)_{k}$, and the reduced component $D_i = C_i^{\mathrm{red}}$ is isomorphic to the subscheme of $\mathbb{P}(L)_{k}^{n+1}$ cut out by
\begin{equation}
\label{equ:comp}
\langle x_{1i},\left(x_{1j},x_{2j}\right)_{j=1,\dots,n+1;j\neq i}\rangle,
\end{equation}
which yields $D_i\cong\mathbb{P}^1_k$.
\end{itemize}
\end{definition}

\begin{theorem}
\label{thm:spec} Assume that the residue field $k$ of the discretely valued ground field $K$ is algebraically closed and let $n\ge2$. 
Let $C\subset \mathbb{P}^2_{K}$ be an irreducible plane curve over $K$. For general coefficients $\underline{a} $ in $\Delta(R_K) \cap R_K^{9 (n+1)}$ the model $\mathfrak{C}^{\underline{a}}$  has star-like reduction over $R_K$. 
\end{theorem}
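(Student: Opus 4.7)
The plan is to use \cref{lem:step1} to reduce the claim to showing that for general $\underline{a}$ every irreducible component of $\mathfrak{C}^{\underline{a}}_k$ lies inside a primary component of $\mathcal{M}_{\underline{a}}(\Gamma)_k$, whose explicit description is provided by \cref{lem:prime}. The key computational input will be the reduction modulo $\mathfrak{m}$ of the defining equation of $C$ after the coordinate change $g_l = M_l \operatorname{diag}(1,t,t^2)$, which collapses the reduced naive model onto a single line in each factor.

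First, let $F(y_1,y_2,y_3)\in R_K[y_1,y_2,y_3]$ be a defining equation of $C$ of degree $d$ with non-zero reduction. Under the substitution $y = g_l x$, each $y_i$ becomes $(M_l)_{i1} x_{1l} + t\,(M_l)_{i2} x_{2l} + t^2 (M_l)_{i3} x_{3l}$, so the only monomial in the expansion of $F(g_l x)$ with a non-zero $t^0$-contribution is $x_{1l}^d$. I would therefore obtain
\[
F(g_l x)\;\equiv\;F\bigl(M_l^{(1)}\bigr)\cdot x_{1l}^{d}\pmod{\mathfrak{m}},
\]
where $M_l^{(1)}$ denotes the first column of $M_l$. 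Since $F$ is non-zero, the condition $F(M_l^{(1)})\neq 0\in k$ cuts out a non-empty Zariski-open subset of the $\underline{a}$-space; combined with the genericity hypothesis of \cref{lem:prime} it will hold for general $\underline{a}$ and every $l=1,\dots,n+1$ simultaneously. Consequently the special fibre of the naive closure of $C$ in $\mathbb{P}(L_l)_{R_K}$ is set-theoretically the line $\{x_{1l}=0\}\subset \mathbb{P}(L)_k$, appearing with multiplicity $d$. Because \cref{lem:prime} identifies the $l$-th primary component of $\mathcal{M}_{\underline{a}}(\Gamma)_k$ with a $\mathbb{P}^2_k$ on which $p_l$ restricts to an isomorphism onto $\mathbb{P}(L)_k$, pulling this line back along that isomorphism produces the component $D_l\cong\mathbb{P}^1_k$ cut out precisely by the ideal in \cref{equ:comp}, and the corresponding scheme-theoretic component $C_l\subset\mathfrak{C}^{\underline{a}}_k$ appears with multiplicity~$d$.

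The hard part will be excluding further components sitting inside one of the secondary components of $\mathcal{M}_{\underline{a}}(\Gamma)_k$ described in \cref{lem:prime}. I plan to argue this via flatness of $\mathfrak{C}^{\underline{a}}/R_K$: the degree of $\mathfrak{C}^{\underline{a}}_k$ in $\mathbb{P}(L)^{n+1}$ with respect to $\mathcal{O}(1,\dots,1)$ equals the degree of the generic fibre $C$, which is $(n+1)d$ (since each projection $g_l^{-1}$ is a linear isomorphism of degree-$d$ curves), so the $n+1$ components $C_l$ already account for $\sum_l d\cdot\deg D_l=(n+1)d$, leaving no room for further components. A complementary argument proceeds by noting that any hypothetical extra component $C'$ would lie in some $(i,l)$-secondary component, which by \cref{lem:prime} has the shape $\mathbb{P}^1\times\mathbb{P}^1$ with the two $\mathbb{P}^1$-factors corresponding to $p_l$ and $p_i$; for dimensional reasons $C'$ would be dominant under at least one of $p_l, p_i$, thereby enlarging the multiplicity of the corresponding naive model beyond $d$ and contradicting the reduction computation. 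Invoking \cref{lem:step1} then promotes the conclusion to star-like reduction and completes the proof.
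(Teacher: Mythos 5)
Your first two steps match the paper: the reduction via \cref{lem:step1} and the computation $F(g_l x)\equiv F(\overline{a}_{11}^{(l)},\overline{a}_{21}^{(l)},\overline{a}_{31}^{(l)})\,x_{1l}^{d} \pmod{\mathfrak{m}}$ are exactly how the paper shows that the naive model in each single factor degenerates to $x_{1l}^d=0$ (this computation in fact lives inside the proof of \cref{lem:step1}). The gap is in what you yourself call the hard part: excluding components of $\mathfrak{C}^{\underline{a}}_k$ inside secondary components. Both of your arguments for this are circular. The degree count $(n+1)d=\sum_l \alpha_l\deg D_l$ only "leaves no room" for extra components if you already know each $C_l$ occurs with multiplicity exactly $d$; but the multiplicity of $C_l$ in $\mathfrak{C}^{\underline{a}}_k$ is \emph{not} obtained by "pulling back" the multiplicity of the line in the naive model $(\mathfrak{C}^{\underline{a}})^{(l)}$, since $\mathfrak{C}^{\underline{a}}\to(\mathfrak{C}^{\underline{a}})^{(l)}$ is only a birational modification. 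What the pushforward of cycles actually gives is $\alpha_l+\sum_{C'}\alpha_{C'}\deg(p_l|_{C'})=d$, the sum running over components $C'$ in secondary components $Y_{jl}$ that dominate the line via $p_l$; so a priori $\alpha_l\le d$, with equality exactly when no such $C'$ exists — which is what you are trying to prove. Your complementary argument fails for the same reason: a curve in the secondary component $Y_{il}$ satisfies $x_{1l}=x_{1i}=0$, hence maps \emph{into} the line $\{x_1=0\}$ under $p_l$ and $p_i$; it does not enlarge the multiplicity of the naive model beyond $d$, it merely shares the budget $d$ with $C_l$. No contradiction arises.

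The paper closes this gap with \cref{lem:step2}, which is the real content of the theorem and has no analogue in your proposal. The idea is to work over the universal base $\mathrm{Spec}\,k[(A_{ij}^{(l)})]$ and exploit that the curve $C$ does not depend on the matrices, so the whole configuration is invariant under permuting the $n+1$ factors. If one irreducible component of the generic fibre of the universal degeneration sat in a secondary component $\mathcal{Y}_{il}$ without lying in a primary one, the permutation action would produce at least $\binom{n+1}{2}$ pairwise distinct such components; specializing and comparing with the Chow class $d\sum_j H_j\prod_{i\neq j}H_i^2$ of $\mathfrak{C}^{\underline{a}}_k$ shows that some index $i$ would then receive a contribution $\sum_j\alpha^{ij}\ge d+1$ to the coefficient of $H_i\prod_{l\neq i}H_l^2$, a contradiction — but only when $n>2d$. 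The case $n\le 2d$ then requires a separate reduction, projecting a Mustafin model with $m+1>2d+1$ factors onto the first $n+1$. If you want to complete your proof you need to supply an argument of this strength (or the symmetry argument itself); the elementary degree bookkeeping you propose cannot distinguish the good configuration from one in which part of the multiplicity $d$ has migrated into secondary components.
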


For general $K$ an analogous results only holds after a passing to a finite extension. 
\begin{corollary}
\label{cor:spec} Let $K$ be any discretely valued field with perfect residue field $k$, $n\ge2$, and let 
$C\subset \mathbb{P}^2_{K}$ be an irreducible  plane curve over $K$. After base change with a finite extension $L$ of $K$ the following holds: For general coefficients $\underline{a} $ in $\Delta(R_L) \cap R_L^{9 (n+1)}$ the model $\mathfrak{C}^{\underline{a}}$  of $C_L \subset\mathbb{P}^2_{L}$ has star-like reduction over $R_L$. 
\end{corollary}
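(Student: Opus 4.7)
The strategy is a descent argument reducing to \cref{thm:spec}. Whether a given $\underline{a}\in\Delta(R_K)\cap R_K^{9(n+1)}$ yields star-like reduction is a condition on the special fiber $\mathfrak{C}^{\underline{a}}_k$ alone, and therefore depends only on the reduction $\overline{\underline{a}}\in k^{9(n+1)}$. Tracing through the construction in \cref{sec:mus} and the defining ideals from \cref{lem:prime} and \cref{equ:comp}, each of the clauses of star-like reduction (correct number of irreducible components, containment in the corresponding primary components, and reduced components cut out by \cref{equ:comp}) can be expressed as the non-vanishing of finitely many polynomials in $\overline{\underline{a}}$ whose coefficients are read off from the reduction modulo $\mathfrak{m}_K$ of the defining equation of $C$ and from the fixed basis of $L$. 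Hence the locus $U\subset\mathbb{A}_{k}^{9(n+1)}$ of \emph{good} reductions is Zariski open.

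Next, I show that $U$ is non-empty. Let $\widehat{K^{\mathrm{nr}}}$ denote the completion of the maximal unramified extension of $K$; since $k$ is perfect, this is a complete discretely valued field with residue field $\overline{k}$. The Mustafin construction and all the defining ideals are compatible with unramified base change, so the analogous good locus associated to $C_{\widehat{K^{\mathrm{nr}}}}$ equals $U_{\overline{k}}$. Applying \cref{thm:spec} over $\widehat{K^{\mathrm{nr}}}$ therefore yields $U(\overline{k})\neq\emptyset$, and in particular $U\neq\emptyset$ as a $k$-scheme.

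Now $U$ is a non-empty scheme of finite type over $k$, so it admits a closed point whose residue field $k'$ is a finite extension of $k$. Since $k$ is perfect, $k'/k$ is separable, and so lifts via Hensel's lemma to a finite unramified extension $L/K$ whose residue field $k_L$ contains $k'$. Then $U(k_L)\supseteq U(k')\neq\emptyset$, so $U\times_k k_L$ is a non-empty Zariski open subset of $\mathbb{A}_{k_L}^{9(n+1)}$. For general $\underline{a}\in\Delta(R_L)\cap R_L^{9(n+1)}$ in the sense of \cref{rem:general}, the reduction $\overline{\underline{a}}$ lies in this open subset, and hence $\mathfrak{C}^{\underline{a}}$ provides a star-like model of $C_L$ over $R_L$.

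The main obstacle is the descent step implicit in the first paragraph: although \cref{thm:spec} is stated only over an algebraically closed residue field, I need the good open subset to be defined over $k$ and not merely over $\overline{k}$. This requires examining the proof of \cref{thm:spec} and verifying that all polynomial non-vanishing conditions imposed on the matrix coefficients have coefficients in $k$; this holds because $C$ is defined over $K$ and the matrices $g_l$ are built from the standard basis of the reference lattice $L$, so every condition ultimately comes from data defined over $R_K$.
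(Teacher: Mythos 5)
Your overall route is the same as the paper's: pass to an extension of $K$ with algebraically closed residue field, invoke \cref{thm:spec} there to produce a non-empty Zariski open set of good reductions, and then choose a finite extension $L/K$ whose residue field meets that open set. The one step I would push back on is your opening claim that star-like reduction ``depends only on the reduction $\overline{\underline{a}}$,'' which you use to define the good locus $U$ as a subset of $k^{9(n+1)}$ and to assert its openness over $k$. That inference is not justified: the model $\mathfrak{C}^{\underline{a}}$ is the closure of $C$ under the embedding determined by the lattices $g_lL$ with $g_l=M_l\,\mathrm{diag}(1,t,t^2)$, and two lifts $\underline{a},\underline{a}'$ of the same $\overline{\underline{a}}$ need not give the same lattices --- writing $M_l'^{-1}M_l=I+tC$ with $C$ integral, the conjugate $\mathrm{diag}(1,t,t^2)^{-1}(I+tC)\,\mathrm{diag}(1,t,t^2)$ has $(3,1)$-entry $t^{-1}C_{31}$, so $g_l'L\neq g_lL$ in general --- hence the special fiber of the closure is not a function of $\overline{\underline{a}}$ alone. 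Fortunately this claim is not needed: \cref{thm:spec} already delivers exactly the right package, namely a non-empty open $U\subset\overline{k}^{9(n+1)}$ such that \emph{every} lift of \emph{every} point of $U$ yields star-like reduction. Since the complement of $U$ is cut out by finitely many polynomials, $U$ is automatically defined over some finite extension $k'$ of $k$ (you do not need, and should not claim, that it is defined over $k$ itself), and your argument lifting a point of $U$ over a finite separable extension of $k$ to an unramified extension $L$ of $K$ then goes through with $k'$ in place of $k$; one also needs the easy observation that star-like reduction descends from $R_{K^\#}$ to $R_L$ because the components in question are cut out by the coordinate ideals of \cref{equ:comp}. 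With that repair your proof is correct and coincides with the paper's.
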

\begin{proof} After base changing $C$ with a discretely valued extension $K^\#$ of $K$ with  residue field $\overline{k}$ we can apply \cref{thm:spec} and find a Zariski open subset $U$  of $\overline{k}^{9 (n+1)}$ such that all preimages $\underline{a}$ in $\Delta(R_{K^\#})\cap R_{K^\#}^{9(n+1)}$ lead to models $\mathfrak{C}^{\underline{a}}$ with star-like reduction over $R_{K^\#}$. If $L \subset K^\#$ is a finite extension of $K$ 
with residue field $\ell$ satisfying $U(\ell) \neq \emptyset$, every choice of coefficients $\underline{a} $ in $\Delta(R_L) \cap R_L^{9 (n+1)}$ reducing to a point in $U(\ell)$ has the property that $\mathfrak{C}^{\underline{a}}$ has star-like reduction over $R_L$.
\end{proof}

The next lemma gives a criterion for a model to have star-like reduction.

\begin{lemma}
\label{lem:step1} We assume that the residue field of $K$ is algebraically closed and $n\ge2$. 
Suppose that for a general choice of $\underline{a}$ in $\Delta(R_K) \cap R_K^{9 (n+1)}$, all irreducible components of the special fiber $\mathfrak{C}^{\underline{a}}_{k}$ are contained in primary components of $\mathcal{M}_{\underline{a}}(\Gamma)_{k}$. Then for a general choice of coefficients $\underline{a}$ the model $\mathfrak{C}^{\underline{a}}$ has star-like reduction over $R_K$.
\end{lemma}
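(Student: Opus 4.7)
The strategy is to combine the hypothesis with an explicit computation of the reduction of $g_l^{-1}(C) \subset \mathbb{P}(L)_R$ for each $l=1,\dots,n+1$. If $C \subset \mathbb{P}^2_K$ is cut out by a homogeneous polynomial $f$ of degree $d$, then $g_l^{-1}(C)$ is cut out by
\[ F_l(X,Y,Z) \,:=\, f\!\left(M_l \cdot (X,\,tY,\,t^2Z)^T\right). \]
Reducing modulo $t$ kills the second and third columns of $M_l$ and, by homogeneity of $f$, yields
\[ F_l \bmod t \,=\, X^d \cdot f\!\left(a_{11}^{(l)},\, a_{21}^{(l)},\, a_{31}^{(l)}\right). \]
For general $\underline{a}$ the scalar on the right is a unit of $R$ for each $l$, so $V(F_l) \subset \mathbb{P}(L)_R$ is flat over $R$ and coincides with $\overline{g_l^{-1}(C)}$; its special fiber is the line $\{x_{1l} = 0\} \subset \mathbb{P}(L)_k$ taken with multiplicity $d$.

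By \cref{lem:prime}, the $l$-th primary component $P_l$ of $\mathcal{M}_{\underline{a}}(\Gamma)_k$ is cut out by the conditions $x_{1j}=x_{2j}=0$ for $j \neq l$, so $p_l$ restricts to an isomorphism $P_l \xrightarrow{\sim} \mathbb{P}(L)_k$. By assumption, each irreducible component $C_i$ of $\mathfrak{C}^{\underline{a}}_k$ lies in some primary component $P_{\sigma(i)}$. Applying the computation above, $D_i$ embeds via $p_{\sigma(i)}$ into $\{x_{1,\sigma(i)}=0\}$, and since $D_i$ is irreducible of dimension one it must exhaust this line. Hence $D_i \cong \mathbb{P}^1_k$ and, inside $P_{\sigma(i)}$, it agrees with the subscheme described in \cref{equ:comp}.

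It remains to show that $\sigma \colon \{1,\dots,m\} \to \{1,\dots,n+1\}$ is a bijection. If $\sigma(i)=\sigma(j)=l$, then the previous paragraph forces $D_i = D_j = \{x_{1l}=0\}$ inside $P_l$, contradicting the distinctness of the irreducible components, so $\sigma$ is injective. For surjectivity I would use that $p_l \colon \mathfrak{C}^{\underline{a}} \to \overline{g_l^{-1}(C)}$ is a proper surjection of $R$-schemes and hence surjective on special fibers, so the image covers the full line $\{x_{1l}=0\}$. Under the hypothesis every component of $\mathfrak{C}^{\underline{a}}_k$ lies in some $P_j$, but $p_l(P_j)$ collapses to the single point $[0:0:1]$ whenever $j \neq l$, so some $C_i$ must actually lie in $P_l$. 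This forces $m=n+1$ and star-like reduction follows. The subtle step is surjectivity: the hypothesis is used essentially to exclude components sitting in the secondary components of \cref{equ:sec}, whose $p_l$-image could otherwise also sweep out $\{x_{1l}=0\}$ and spoil the counting.
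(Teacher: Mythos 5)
Your proposal is correct, and it agrees with the paper's proof in its first half: both arguments compute the one-factor model $\overline{g_l^{-1}(C)}\subset\mathbb{P}(L)_R$, observe that for general $\underline{a}$ its special fiber is the line $\{x_1=0\}$ with multiplicity $d$, and deduce that a component of $\mathfrak{C}^{\underline{a}}_k$ sitting in the $l$-th primary component must be the line of \cref{equ:comp} (whence at most one component per primary component). Where you genuinely diverge is the existence step, i.e.\ showing that \emph{every} primary component actually contains a component of $\mathfrak{C}^{\underline{a}}_k$. The paper does this by intersection theory: it compares the Chow class of $\mathfrak{C}^{\underline{a}}_k$ in $(\mathbb{P}^2_k)^{n+1}$ with that of the generic fiber ($d$ times the sum of all $n+1$ monomials of degree $2(n+1)-1$), notes that a component inside the $i$-th primary component can only contribute the monomial $H_i\prod_{j\neq i}H_j^2$, and concludes that each such monomial must be accounted for — obtaining in passing that each component has multiplicity exactly $d$. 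You instead use that $p_l\colon\mathfrak{C}^{\underline{a}}\to\overline{g_l^{-1}(C)}$ is a proper surjection over $R$, hence surjective on special fibers onto the full line $\{x_1=0\}$, while every primary component $P_j$ with $j\neq l$ is crushed by $p_l$ to the point $[0:0:1]$; under the hypothesis this forces a component into $P_l$. This is a valid and more elementary route (it avoids the Chow ring entirely), and you correctly identify that the hypothesis is exactly what rules out the secondary components of \cref{equ:sec}, whose $p_l$-image would otherwise also sweep out the line. The only thing your argument does not deliver, compared with the paper's, is the multiplicity $d$ of each $C_i$ — but that is not required by the definition of star-like reduction, so nothing is missing for the statement at hand. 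Two small points you should make explicit: reduce to the case that $f$ is saturated with respect to $t$ before claiming $V(F_l)$ is $R$-flat, and note that each $C_i$, being one-dimensional, can lie in only one primary component, so that your map $\sigma$ is well defined.
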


\begin{proof} 
Let $f\in K[x_1,x_2,x_3]$ be an irreducible homogeneous polynomial of degree $d$ such that $C = V(f)$.  Without loss of generality, we may assume that $f$ is saturated with respect to $t$. Further, we consider the embedding of $C$ into $\mathbb{P}(L)$ via $g_i^{-1}$. The closure yields a flat proper model of $C$ over $\mathcal{O}_{k}$, which we denote by $\left(\mathfrak{C}^{\underline{a}}\right)^{(i)}$. We see immediately that for general $\underline{a}$ the subscheme $\left(\mathfrak{C}^{\underline{a}}\right)^{(i)}$  of $\mathbb{P}(L)$ is cut out by
\begin{equation}
F=f(a_{11}^{(i)}x_{1}+a_{12}^{(i)}tx_{2}+a_{13}^{(i)}t^2x_{3},a_{21}^{(i)}x_{1}+a_{22}^{(i)}tx_{2}+a_{23}^{(i)}t^2x_{3},a_{31}^{(i)}x_{1}+a_{32}^{(i)}tx_{2}+a_{33}^{(i)}t^2x_{3}).
\end{equation}
Moreover, we compute the reduction of $F$ modulo the valuation ideal $\mathfrak{m}$, which we denote by $\tilde{F}$. Then the special fiber $\left(\mathfrak{C}^{\underline{a}}\right)^{(i)}$ is cut out by $\langle\tilde{F}\rangle$ in $\mathbb{P}^2_{k}=\mathbb{P}(L)_{k}$. We observe that
\begin{equation}
\tilde{F}=h(\overline{a}_{11}^{(i)},\overline{a}_{21}^{(i)},\overline{a}_{31}^{(i)})x_1^d,
\end{equation}
where $h(\overline{a}_{11}^{(i)},\overline{a}_{21}^{(i)},\overline{a}_{31}^{(i)})$ is a non-zero polynomial expression in $\overline{a}_{11}^{(i)},\overline{a}_{21}^{(i)},\overline{a}_{31}^{(i)}$ for generic choices of these coefficients. Therefore, we obtain
\begin{equation}
\label{equ:proj}
\left(\mathfrak{C}^{\underline{a}}\right)^{(i)}_{k}=\mathrm{Proj}\left(\faktor{{k}[x_1,x_2,x_3]}{\langle x_1^d\rangle}\right),
\end{equation}
which yields a degree $d$ subscheme of $\mathbb{P}^2_k$ whose underlying topological space is isomorphic to $\mathbb{P}^1_k$.\par
We now relate this to $\mathfrak{C}^{\underline{a}}$. For this purpose, recall that the Chowring of $\left(\mathbb{P}_K^2\right)^{n+1}$ is given by
\begin{equation}
\mathcal{A}=\faktor{\mathbb{Z}[H_1,\dots,H_{n+1}]}{\langle H_1^3,\dots,H_{n+1}^3\rangle},
\end{equation}
where $H_i$ is the hyperplane class in the $i-$th factor. The Chow class of $\mathfrak{C}^{\underline{a}}_{K}$ is given as $d$ times the sum over all monomials of degree $2(n+1)-1$ in $\mathcal{A}$: The Chow class of $C\subset\mathbb{P}^2_{K}$ is given by $d\cdot H$, where $H$ is the hyperplane class. Under the diagonal embedding the hyperplane class $H$ pushes forward to
\begin{equation}
\sum_{\substack{0\le n_i\le 2\\\sum n_i=2(n+1)-1}}\prod H_i^{n_i}.
\end{equation}

Each monomial in $\mathcal{A}$ is given by $\prod H_i^{n_i}$, where $0\le n_i\le 2$. There are exactly $n+1$ such monomials of degree $2(n+1)-1$, which are given by those $\prod H_i^{n_i}$, such that there exists a $j\in\{1,\dots,n+1\}$ with $n_i=2$ for $i\neq j$ and $n_j=1$. The Chow classes of $\mathfrak{C}^{\underline{a}}_{K}$ and $\mathfrak{C}^{\underline{a}}_{k}$ coincide (see the discussion previous to Corollary 20.3 in \cite{fulton2013intersection}). Moreover, the Chow class of $\mathcal{M}_{\underline{a}}(\Gamma)_k$ is given by the sum over all monomials of degree $2n$.  Then, there are two cases for the monomials $\prod H_i^{n_i}$, where $0\le n_i\le 2$, appearing in the Chow class of $\mathcal{M}_{\underline{a}}(\Gamma)_k$:
\begin{itemize}
\item There exists $j$, such that $n_i=2$ for $i\neq j$ and $n_j=0$. Such a monomial corresponds to the $j-$th primary component, which we denote by $Y_j$.
\item There exists $j,l$, such that $n_{\nu}=2$ for $\nu\neq j,l$ and $n_j=n_l=1$. This monomial corresponds to the secondary component which projects to $\mathbb{P}^1_k$ via $p_j$ and $p_l$, which we denote by $Y_{jl}$.
\end{itemize}
We consider a reduced irreducible component $D$ of $\mathfrak{C}^{\underline{a}}_{k}$, which is contained in the $i-$th primary component of  $\mathcal{M}_{\underline{a}}(\Gamma)_{k}$. Then analogous to the proof of \cref{lem:prime}, it projects onto the single irreducible component of $\left(\mathfrak{C}^{\underline{a}}\right)^{(i)}$. By \cref{equ:proj}, the reduced irreducible component $D$ is cut out by an ideal containing $x_{1i}$ for general coefficients. Further, the ideal of $D$ contains the ideal defining the $i-$th primary component of $\mathcal{M}_{\underline{a}}(\Gamma)_{k}$. Moreover, as proved in \cref{lem:prime}, for general coefficients the $i-$th primary component is cut out by \cref{equ:prim}. This already yields an irreducible component with reduced structure isomorphic to $\mathbb{P}^1_{k}$ and it is the only irreducible component, which lies in the $i-$th primary component of $\mathcal{M}_{\underline{a}}(\Gamma)_{k}$. Note, that this is the ideal given in \cref{equ:comp}.\par
By assumption all components of $\mathfrak{C}^{\underline{a}}_{k}$ lie in primary components. As we have already seen that for general coefficients there lies at most one component of $\mathfrak{C}^{\underline{a}}_{k}$ in each primary component, this assumption yields that there are at most $n+1$ irreducible components of $\mathfrak{C}^{\underline{a}}_{k}$. What is left to prove is that there are exactly $n+1$ irreducible components, i.e. there is one in each primary component of $\mathcal{M}_{\underline{a}}(\Gamma)_k$. To see this, we observe that if there is an irreducible component of $\mathfrak{C}^{\underline{a}}_{k}$ contained in the $i-$th primary component, it contributes the Chow class $M_i=\alpha_i\prod H_j^{n_j}$, where $n_j=2$ for $j\neq i$ and $n_i=1$, and where $\alpha_i$ is the multiplicity of the component. As each monomial is only contributed by a single component, we observe that $\alpha_i=d$. Thus, there are exactly $n+1$ components with multiplicity $d$. 
\end{proof}

We need the following geometric lemma for the proof of our next result.

\begin{lemma} 
\label{lemma-fibers}
Let $S$ be a noetherian irreducible and reduced scheme with generic point $\eta$ and consider an $S$-scheme $X$ of finite type. Let $Y$ be an irreducible
closed subset of $X$ with non-empty generic fiber $Y_\eta$, and let $Z$ be any closed subset of $X$. 

i) If the generic fiber  $Y_\eta$ is not contained in $Z$, then there exists a dense open subset $U$ of $S$ such that for all $s \in U$ the fiber $Y_s$ is not contained in $Z$. 

ii) If  the generic fiber $Y_\eta$ is contained in $Z$, then there exists a dense open subset $U$ of $S$ such that for all $s \in U$ the fiber $Y_s$ is non-empty and contained in $Z$.

\end{lemma}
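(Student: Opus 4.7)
The plan is to reduce both parts to Chevalley's theorem (the image of a morphism of finite type is constructible) combined with the elementary observation that any constructible subset of an irreducible topological space containing its generic point contains a dense open subset.

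First I would establish a preliminary fact: letting $f \colon Y \to S$ denote the restriction of the structure morphism and $y$ the generic point of the irreducible scheme $Y$, the non-emptiness of $Y_\eta$ forces $f(y) = \eta$, so that in fact $y \in Y_\eta$. Indeed, picking any $y' \in Y_\eta$, the inclusion $y' \in \overline{\{y\}} = Y$ implies $f(y') \in \overline{\{f(y)\}}$, hence $\eta \in \overline{\{f(y)\}}$; on the other hand $f(y) \in \overline{\{\eta\}} = S$ automatically, so the $T_0$-property of $S$ forces $f(y) = \eta$.

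For part (i), since $Y_\eta$ is not contained in $Z$, the open subset $Y \setminus Z$ of $Y$ meets the fiber $Y_\eta$, so its image in $S$ contains $\eta$. By Chevalley's theorem this image is constructible in $S$, and since $S$ is irreducible any constructible subset containing $\eta$ must contain a dense open subset $U \subseteq S$. For every $s \in U$ there is then a point of $Y_s$ lying outside $Z$, so $Y_s \not\subseteq Z$, as required.

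For part (ii), by the preliminary fact $y \in Y_\eta \subseteq Z$, and since $Z$ is closed in $X$ we conclude $Y = \overline{\{y\}} \subseteq Z$; in particular every fiber $Y_s$ is set-theoretically contained in $Z$. It remains only to ensure $Y_s \neq \emptyset$ on a dense open subset of $S$. Applying Chevalley again to $f \colon Y \to S$ shows that $f(Y)$ is constructible and contains $\eta$, hence contains a dense open subset $U \subseteq S$; for every $s \in U$ the fiber $Y_s$ is non-empty and contained in $Z$, as desired.

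I do not foresee any serious obstacle: the only point requiring a small topological argument is the identification of the generic point $y$ of $Y$ with a point of $Y_\eta$, and the remainder is a direct application of Chevalley's theorem together with the standard description of constructible subsets of an irreducible base.
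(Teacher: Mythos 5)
Your proof is correct. The preliminary step identifying the generic point $y$ of $Y$ with a point of $Y_\eta$ is argued carefully via the $T_0$-property, and it is precisely what justifies the assertion (used implicitly in the paper's part (ii)) that $\overline{Y_\eta}=Y$. The only genuine difference from the paper's argument is the tool used to produce the dense open subset of $S$: the paper invokes generic flatness to find a dense open $V\subseteq S$ over which $Y\to S$ is flat and hence open, and then takes the open image of $(Y\setminus Z)\cap Y_V$, resp.\ of $Y_V$; you instead apply Chevalley's theorem to see that the image of $Y\setminus Z$, resp.\ of $Y$, is constructible, and combine this with the standard fact that a constructible subset of an irreducible space containing the generic point contains a dense open subset. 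Both are spreading-out arguments of comparable length. Your route has the small advantage of not using the hypothesis that $S$ is reduced, which the paper needs in order to invoke generic flatness, whereas the paper's route yields the marginally stronger statement that the relevant image is open in a dense open of $S$; neither difference affects the applications in the paper.
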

\begin{proof}

i) The subset $Y \backslash Z$ is open and non-empty in $Y$, which we endow with the reduced scheme structure. By generic flatness \cite[28.26.1]{SP}, there exists an open dense subset $V$ of $S$, such that $Y_V$ is flat over $V$ and hence open. Therefore the image $U$ of the open subset $(Y \backslash Z) \cap Y_V$ is open in $V \subset S$. It is non-empty, since it contains the generic point, and therefore dense. For all $s \in U$ we find that $Y_s$ is not contained in $Z$. 

ii) A similar argument as in i) using generic flatness implies that there exists an open subset $U$ of $S$ with $Y_s$ non-empty for all $s \in U$. Then the claim follows since the closure of $Y_\eta$ is $Y$.
\end{proof}

Our next goal is to show that for sufficiently general choices of $a_{ij}^{(l)}$, the irreducible components of $\mathfrak{C}^{\underline{a}}_{k}$ are in fact all contained in primary components.
In  order to show this, we work in the following algebraic set-up.\par 
We consider the ring $\mathcal{R}={R}\left[(A_{ij}^{(l)})_{i,j=1,\dots,3;l=1,\dots,n+1}\right]$ and the field $\mathcal{K}=\mathrm{Quot}(\mathcal{R})$. Further, we consider a $\mathcal{K}-$vector space of dimension $3$, which we denote by $\mathcal{V}$ with standard basis $\mathfrak{e}_1,\mathfrak{e}_2,\mathfrak{e}_3$. We denote by $\mathbb{P}(\mathcal{V})$ the projective space, and by  $\mathcal{L}=\mathcal{R}\mathfrak{e}_1+\mathcal{R}\mathfrak{e}_2+\mathcal{R}\mathfrak{e}_3$ the standard lattice. Let $\mathbb{P}(\mathcal{L})=\mathrm{ProjSym}(\mathrm{Hom}_{\mathcal{R}}(\mathcal{L},\mathcal{R}))$. We further consider the matrices
\begin{equation}
\mathfrak{g}_i=\begin{pmatrix}
A_{11}^{(i)} & A_{12}^{(i)} & A_{13}^{(i)}\\
A_{21}^{(i)} & A_{22}^{(i)} & A_{23}^{(i)}\\
A_{31}^{(i)} & A_{32}^{(i)} & A_{33}^{(i)}
\end{pmatrix}\cdot
\begin{pmatrix}
1 & 0 & 0\\
0 & t & 0\\
0 & 0 & t^2
\end{pmatrix}
\end{equation}
and the morphism of $\mathcal{R}$-schemes
\begin{equation}
\label{equ:mustemb}
\mathbb{P}(\mathcal{V})\xrightarrow{(\mathfrak{g}_1^{-1}\times\dots\times\mathfrak{g}_{n+1}^{-1})\circ\Delta}\mathbb{P}(\mathcal{L})^{n+1}
\end{equation}
and denote the closure of this map endowed with the reduced scheme structure by $\mathcal{N}(\Gamma')$, where we put $\Gamma'=\{\mathfrak{g}_1\mathcal{L},\dots,\mathfrak{g}_{n+1}\mathcal{L}\}$. Let $y_1,y_2,y_3$ the standard coordinates of $\mathbb{P}(\mathcal{L})$ and consider the projection
\begin{equation}
\mathfrak{p}_i:\mathbb{P}(\mathcal{L})^{n+1}\to\mathbb{P}(\mathcal{L})
\end{equation}
to the $i-$th factor. We then denote $y_{ij}=p_j^\ast y_i$. Similar to our discussion in \cref{sec:must}, we observe that $\mathcal{N}(\Gamma')$ is the subscheme of $\mathbb{P}(\mathcal{L})^{n+1}$ cut out by
\begin{equation}
I_2\begin{pmatrix}
\mathfrak{g}_1\begin{pmatrix}
y_{11}\\
y_{21}\\
y_{31}
\end{pmatrix} &
\cdots
& \mathfrak{g}_n\begin{pmatrix}
y_{1n+1}\\
y_{2n+1}\\
y_{3n+1}
\end{pmatrix}
\end{pmatrix}\cap \mathcal{R}[(y_{ij})].
\end{equation}
We consider an irreducible homogeneous polynomial in $f'\in\mathcal{K}[y_1,y_2,y_3]$. This defines a closed subscheme $\mathcal{C}\subset\mathbb{P}(\mathcal{V})$ over $\mathcal{K}$.  We obtain a scheme $\mathscr{C}$ over $\mathrm{Spec}(\mathcal{R})$ with generic fiber $\mathcal{C}$ by embedding $\mathcal{C}$ via \cref{equ:mustemb} into $\mathcal{N}(\Gamma')$ and taking the closure endowed with the reduced induced structure.\par 
We now consider the ring $\mathcal{R}'=\faktor{\mathcal{R}}{(t)}={k}[(A_{ij}^{(l)})_{i,j=1,2,3;l=1,\dots,n+1}]$. We denote the pullbacks to $\mathrm{Spec}(\mathcal{R}')$ by $\mathcal{N}'(\Gamma')=\mathcal{N}(\Gamma')_{\mathrm{Spec}(\mathcal{R}')}$ and $\mathscr{C}'=\mathscr{C}_{\mathrm{Spec}(\mathcal{R}')}$. We note that $\mathcal{N}(\Gamma')$ and $\mathscr{C}$ are of finite type over the noetherian ring $\mathcal{R}$ and therefore $\mathcal{N}'(\Gamma')$ and $\mathscr{C}'$ are of finite type over $\mathcal{R}'$. \par 

For every choice of coefficients $\underline{a}$ in $\Delta(R) \cap R^{9 (n+1)}$ we have a natural homomorphism $\lambda_{\underline{a}}: \mathcal{R} \rightarrow R$ mapping $A_{ij}^{(l)}$ to $a_{ij}^{(l)}$. The corresponding base change of  $\mathcal{N}(\Gamma')$ with $R$ is by construction isomorphic to $\mathcal{M}_{\underline{a}}(\Gamma)$, where $\Gamma = \lambda_{\underline{a}} (\Gamma')$ is the set of lattices we get by inserting the coefficients $a_{ij}^{(l)}$ for $A_{ij}^{(l)}$. Its special fiber $\mathcal{M}_{\underline{a}}(\Gamma)_k$ is therefore isomorphic to the base change of $\mathcal{N}'(\Gamma')$ with respect to the homomorphism $\overline{\lambda}_{\underline{a}}: \mathcal{R}' \rightarrow k$ sending $A_{ij}^{(l)}$ to the elements $\overline{a}_{ij}^{(l)}$ in the residue field.\par 
Similarly, the base change of $\mathscr{C}$ along $\lambda_{\underline{a}}$ yields a scheme $\lambda_{\underline{a}}^{\ast}\mathscr{C}$.  We consider the map $\Lambda_{\underline{a}}$ obtained by composing $\lambda_{\underline{a}}$ with $R\hookrightarrow K$. Assume that  $\Lambda_{\underline{a}}^{\ast}  \mathscr{C} \subset \mathbb{P}^2_K$ is a reduced and irreducible curve for general $\underline{a}$. \par
There exists a non-empty open  $V$ such that $\mathscr{C}_V$ is flat  over $V$ \cite[28.26.1]{SP}. If  the residue field $k$ is big enough, so that $V(k) \neq \emptyset$, we find that $\mathfrak{D}^{\underline{a}}:=\lambda_{\underline{a}}^{\ast}\mathscr{C}$ is the Mustafin model of its generic fiber $\mathfrak{D}^{\underline{a}}_K=\Lambda_{\underline{a}}^{\ast}\mathscr{C}$
for general $\underline{a}$.


Before stating the next lemma, we introduce some more notation. Let $\sigma$ be a permutation acting on $\{1,\dots,n+1\}$, then we define the ring isomorphism
\begin{equation}
\tau_{\sigma}:\mathcal{K}[y_1,y_2,y_3]\to \mathcal{K}[y_1,y_2,y_3]
\end{equation}
by $A_{ij}^{(l)}\mapsto A_{ij}^{(\sigma(l))}$ and the homomorphism
\begin{equation}
\label{equ:morperm}
\pi_{\sigma}:\mathcal{R}[\left(y_{\mu\nu}\right)_{\mu=1,2,3;\nu=1,\dots,n+1}]\to \mathcal{R}[\left(y_{\mu\nu}\right)_{\mu=1,2,3;\nu=1,\dots,n+1}]
\end{equation}
by $(A_{ij}^{(l)},y_{il})\mapsto(A_{ij}^{(\sigma(l))},y_{i\sigma(l)})$. 

\begin{lemma}
\label{lem:step2} Assume that the residue field $k$ is algebraically closed. 
Let $\mathcal{C}\subset\mathbb{P}(\mathcal{V})$ be a curve defined by an irreducible homogeneous polynomial $f'\in\mathcal{K}[y_1,y_2,y_3]$ and $n\ge2$.  Furthermore, let $\mathscr{C}$ be the closure of the image of $\mathcal{C}$ under the image of the map in \cref{equ:mustemb} endowed with the reduced structure. Assume that $\mathfrak{D}^{\underline{a}}_K = \Lambda_{\underline{a}}^{\ast}  \mathscr{C} $ is a reduced and irreducible curve for general $\underline{a}$. Then, we have
\begin{enumerate}
\item If $\tau_{\sigma}(I(\mathcal{C}))=I(\mathcal{C})$, then $\pi_{\sigma}(I(\mathscr{C}))=I(\mathscr{C})$. 
\item If $\pi_{\sigma}(I(\mathscr{C}))=I(\mathscr{C})$, there exists an open subset $W\subset \mathrm{Spec}(\mathcal{R}')$, such that for each point $y \in W(k) \subset k^{9 (n+1)}$and each lift $\underline{a} \in \Delta(R) \cup R^{9 (n+1)}$ of $y$ all irreducible components of $\mathscr{C}'_y$ are contained in primary components of the special fiber $\mathcal{N}'(\Gamma')_y\cong\mathcal{M}_{\underline{a}}(\Gamma)_k$ of the corresponding Mustafin variety.
\end{enumerate}
\end{lemma}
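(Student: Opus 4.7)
For part~(1), the plan is to exploit a basic equivariance property of the embedding
\[
\mathbb{P}(\mathcal{V}) \xrightarrow{(\mathfrak{g}_1^{-1}\times\dots\times\mathfrak{g}_{n+1}^{-1})\circ\Delta} \mathbb{P}(\mathcal{L})^{n+1}.
\]
Permuting the matrices $\mathfrak{g}_1,\dots,\mathfrak{g}_{n+1}$ by $\sigma$ (which is precisely the effect of $\tau_\sigma$ on the coefficient ring $\mathcal{R}$) corresponds to permuting the factors of $\mathbb{P}(\mathcal{L})^{n+1}$ by $\sigma$, while the diagonal $\Delta$ is obviously invariant under factor permutation. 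Combining these two facts, if the defining polynomial $f'$ of $\mathcal{C}$ is $\tau_\sigma$-invariant, then the ideal in $\mathcal{R}[(y_{ij})]$ obtained by pulling back $f'$ through the embedding is $\pi_\sigma$-invariant. Since closure with the reduced structure preserves this invariance, one concludes $\pi_\sigma(I(\mathscr{C}))=I(\mathscr{C})$.

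For part~(2), the plan is to reduce via \cref{lemma-fibers} to a statement about the generic fiber of $\mathscr{C}'$ over $\mathrm{Spec}(\mathcal{R}')$. Let $Z\subset\mathcal{N}'(\Gamma')$ denote the closed subscheme equal to the union (over $\mathrm{Spec}(\mathcal{R}')$) of the Zariski closures of the primary components identified in \cref{lem:prime}. Let $Y_1,\dots,Y_r$ be the irreducible components of $\mathscr{C}'$, and let $\eta$ denote the generic point of $\mathrm{Spec}(\mathcal{R}')$. The plan is to prove that $(Y_j)_\eta\subset Z_\eta$ for every $j$, and then to apply \cref{lemma-fibers}(ii) to each $Y_j$ separately, obtaining dense open subsets $W_j\subset\mathrm{Spec}(\mathcal{R}')$ with $(Y_j)_y\subset Z_y$ for all $y\in W_j$; the finite intersection $W=\bigcap_j W_j$ is then the required open set, and for each lift $\underline{a}$ of any $y\in W(k)$ the fibers of the $Y_j$ over $y$ recover all components of $\mathscr{C}'_y\cong\mathfrak{C}^{\underline{a}}_k$.

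The main obstacle is to establish the generic containment $(Y_j)_\eta\subset Z_\eta$. The proof combines three ingredients: (a) the description of the secondary components from \cref{lem:prime}, each of which is isomorphic to $\mathbb{P}^1\times\mathbb{P}^1$, so that a $1$-dimensional subscheme contained in a secondary component $Y_{ik}$ but not in either primary $Y_i$ or $Y_k$ must have bidegree $(a,b)$ with $a,b>0$ and therefore contributes simultaneously to both monomials $H_i\prod_{l\neq i}H_l^2$ and $H_k\prod_{l\neq k}H_l^2$ in the ambient Chow class; (b) the Chow class of $\mathscr{C}'$ in $(\mathbb{P}^2)^{n+1}$, which by the argument already given in the proof of \cref{lem:step1} equals $d\sum_{j=1}^{n+1}H_j\prod_{i\neq j}H_i^2$, that is, exactly $d$ times the sum of the $(n+1)$ primary-type monomials; and (c) the hypothesis $\pi_\sigma(I(\mathscr{C}))=I(\mathscr{C})$, which forces the distribution of components of $\mathscr{C}'_\eta$ among the primary and secondary strata of $\mathcal{N}'(\Gamma')_\eta$ to be compatible with the $\sigma$-action permuting these strata. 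Arguing by contradiction, if some $(Y_j)_\eta$ were contained in a secondary component without lying in any primary, its $\pi_\sigma$-orbit together with the unavoidable primary contributions needed to attain the prescribed Chow class would overshoot the decomposition in (b), yielding a contradiction. Hence $(Y_j)_\eta\subset Z_\eta$ for every $j$, and the proof is concluded by the application of \cref{lemma-fibers}(ii) described above.
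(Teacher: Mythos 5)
Your overall strategy---exploit the $\pi_\sigma$-symmetry, compare with the Chow class of $\mathscr{C}'$ in $(\mathbb{P}^2)^{n+1}$, and specialize via \cref{lemma-fibers}---is the same as the paper's, and your treatment of part (1) and of the final specialization step is fine. However, the decisive step, namely deriving a contradiction from the assumption that some component $(Y_j)_\eta$ lies in a secondary component but in no primary one, is only asserted (``would overshoot the decomposition''), and as described it does \emph{not} go through for all $n$. Carried out carefully, the argument runs as follows: the $\pi_\sigma$-orbit of such a component produces (after one verifies, by analyzing the intersections $\mathcal{Y}_{ij}\cap\mathcal{Y}_{\sigma(i)\sigma(j)}$, that the translates are pairwise distinct) at least $\binom{n+1}{2}$ distinct one-dimensional components in the special fiber, each contributing a positive coefficient to at least one monomial of the form $H_i\prod_{l\neq i}H_l^2$; by pigeonhole some index $i$ receives more than $d$ such contributions only if $\frac{n(n+1)}{2}>(n+1)d$, i.e.\ only if $n>2d$. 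For $n\le 2d$ the count $\binom{n+1}{2}\le d(n+1)$ fits inside the available Chow class and no contradiction arises. The paper closes this gap with a separate reduction step: it enlarges the configuration to $m+1$ lattices with $m>2d$, applies the counting argument there, and projects back onto the first $n+1$ factors, observing that a component inside a primary component maps to a point or to a component inside a primary component. Your proposal contains no analogue of this reduction, so it does not prove the lemma in the stated generality.

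Two smaller points. First, your claim in ingredient (a) that a curve in a secondary component $\mathcal{Y}_{ik}\cong\mathbb{P}^1\times\mathbb{P}^1$ not contained in $\mathcal{Y}_i$ or $\mathcal{Y}_k$ must have bidegree $(a,b)$ with $a,b>0$ is false: the intersection $\mathcal{Y}_{ik}\cap\mathcal{Y}_i$ is a single ruling, so a general horizontal or vertical ruling lies in neither primary component yet is contracted by one projection. The correct (and sufficient) statement, used in the paper, is that at least one of the two projections is non-constant, so at least one of the two coefficients is positive; this is why the pigeonhole over the map $\omega$ assigning to each pair the ``non-contracted'' index is needed. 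Second, the pairwise distinctness of the $\pi_\sigma$-translates is not automatic and requires the case analysis showing that $\mathcal{Y}_{ij}\cap\mathcal{Y}_{\sigma(i)\sigma(j)}$ is contained in a primary component (or is a point) unless $\sigma$ stabilizes $\{i,j\}$; without it the ``orbit'' could collapse and the count could fall below $\binom{n+1}{2}$.
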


\begin{proof}
The first part of the lemma is straightforward. For the second part, let $\eta$ be the generic point of $\mathrm{Spec}(\mathcal{R}')$. We study the irreducible components of the generic fibers $\mathcal{N}'(\Gamma')_{\eta}$ and $\mathscr{C}'_{\eta}$.
(1)  By the same computations as in the proof of \cref{lem:prime}, we obtain that $\mathcal{N}'(\Gamma')_{\eta}$ is contained in
\begin{equation}
\mathcal{Y}_1\cup\dots\cup\mathcal{Y}_{n+1}\cup\bigcup_{i\neq l}\mathcal{Y}_{il},
\end{equation}
where the scheme $\mathcal{Y}_i$ is cut out by
 \begin{equation}
\langle \left(y_{1j},y_{2j}\right)_{j=1,\dots,n+1;j\neq i}\rangle
\end{equation}
and the scheme $\mathcal{Y}_{il}$ is cut out by
\begin{equation}
\langle y_{1l},y_{1i},\left(y_{1j},y_{2j}\right)_{j=1,\dots,n+1;j\neq i,l}\rangle
\end{equation}
for $i,l=1,\dots,n+1$ and $i\neq l$. We claim that 
\begin{equation}
\mathcal{N}'(\Gamma')_{\eta}=\mathcal{Y}_1\cup\dots\cup\mathcal{Y}_{n+1}\cup\bigcup_{i\neq l}\mathcal{Y}_{il}
\end{equation}
is a decomposition into irreducible components.

In fact, assume that there exists an irreducible component $\cX$ of $\mathcal{N}'(\Gamma')_{\eta}$ which is not contained in the union on the right hand side. 
Then by \cref{lemma-fibers} there exists a non-empty open set $W\subset\mathrm{Spec}(\mathcal{R'})$, such that for all $y\in W$ we have 
\begin{equation}
\mathcal{N}'(\Gamma')_y\subsetneq\left(\overline{\mathcal{Y}_1}\right)_y\cup\dots\cup\left(\overline{\mathcal{Y}_{n+1}}\right)_y\cup\bigcup_{i\neq l}\left(\overline{\mathcal{Y}_{il}}\right)_y,
\end{equation}
which contradicts \cref{lem:prime}, since for all points $y\in \Delta(k) \cap W(k)$  the scheme $\mathcal{N}'(\Gamma')_y$ is the special fiber of a Mustafin variety. Moreover, we see immediately that for all  points $y\in \Delta(k) \cap W(k)$, the subset $\left(\overline{\mathcal{Y}}_i\right)_y$ is the $i-$th primary component of the respective Mustafin variety and $\left(\overline{\mathcal{Y}}_{ij}\right)_y$ is a secondary component mapping onto $\mathbb{P}^1$ via the projections to the $i-$th and $j-$th factor.\par

(2)  Let $\mathcal{Z}$ be an irreducible component of $\mathscr{C}'_{\eta}$, hence $\mathcal{Z} \subset\mathcal{N}'(\Gamma')_\eta$. We claim that there exists an index $i$, such that $\mathcal{Z}\subset\mathcal{Y}_i$. We first treat the case $n >2d$. Assume there exists no such $i$, then there exist $i \neq l$, such that $\mathcal{Z}\subset \mathcal{Y}_{il}$ and $\mathcal{Z}\not\subset \mathcal{Y}_i,\mathcal{Y}_l$. 
\par 
Let $I_1=I(\mathcal{N}(\Gamma'))$ and $I_2=I(\mathscr{C})$ be the multihomogeneous ideals in $\mathcal{R}[\left(y_{\mu\nu}\right)_{\substack{\mu=1,2,3;\\\nu=1,\dots,n+1}}]$. Let $\sigma$ be a permutation on $\{1,\dots,n+1\}$ and recall the ring isomorphism
\begin{equation}
\label{equ:morperm}
\pi_{\sigma}:\mathcal{R}[\left(y_{\mu\nu}\right)_{\mu=1,2,3;\nu=1,\dots,n+1}]\to \mathcal{R}[\left(y_{\mu\nu}\right)_{\mu=1,2,3;\nu=1,\dots,n+1}]
\end{equation}
induced by $(A_{ij}^{(l)},y_{il})\mapsto(A_{ij}^{(\sigma(l))},y_{i\sigma(l)})$. By construction, we have $\pi_{\sigma}(I_1)=I_1$ and by assumption, we have $\pi_{\sigma}(I_2)=I_2$. Moreover, let $J_1=I(\mathcal{N}'(\Gamma')_\eta)$ and $J_2=I(\mathscr{C}'_\eta)$ be the corresponding multihomogeneous ideals  in $\mathcal{Q}=\mathrm{Quot}(\mathcal{R}')[\left(y_{\mu\nu}\right)_{\mu=1,2,3;\nu=1,\dots,n+1}]$. The isomorphism $\pi_{\sigma}$ induces an isomorphism
\begin{equation}
\pi_{\sigma}':\mathcal{Q}\to\mathcal{Q}
\end{equation}
with $\pi_{\sigma}'(J_1)=J_1$ and $\pi_{\sigma}'(J_2)=J_2$. The induced isomorphism of schemes $\Pi_{\sigma}':\mathcal{N}'(\Gamma')_{\eta}\to\mathcal{N}'(\Gamma')_{\eta}$ yields $\Pi_{\sigma}'(\mathcal{Y}_i)=\mathcal{Y}_{\sigma(i)}$ and $\Pi_{\sigma}'(\mathcal{Y}_{ij})=\mathcal{Y}_{\sigma(i)\sigma(j)}$. Moreover as $\pi_{\sigma}'(J_2)=J_2$, we also have an isomorphism of schemes $\restr{\Pi_{\sigma}'}{\mathscr{C}_{\eta}'}:\mathscr{C}_{\eta}'\to\mathscr{C}_{\eta}'$. Let $\mathcal{Z}\subset \mathcal{Y}_{ij}$ and $\mathcal{Z}\not\subset\mathcal{Y}_i,\mathcal{Y}_j$. For any permutation $\sigma$, we obtain that $\restr{\Pi_{\sigma}'}{\mathscr{C}_\eta'}(\mathcal{Z})\subset\mathcal{Y}_{\sigma(i)\sigma(j)}$ and $\restr{\Pi_{\sigma}'}{\mathscr{C}_\eta'}(\mathcal{Z})\not\subset \mathcal{Y}_{\sigma(i)},\mathcal{Y}_{\sigma(j)}$. We first prove the following claim.

If $\sigma$ does not stabilize $\{i,j\}$, then it follows that $\restr{\Pi_{\sigma}'}{\mathscr{C}_\eta'}(\mathcal{Z})\neq\mathcal{Z}$.

In fact,  if $ \restr{\Pi_{\sigma}'}{\mathscr{C}_\eta}(\mathcal{Z})  =\mathcal{Z}$, we find $\mathcal{Z}\subset\mathcal{Y}_{ij}\cap\mathcal{Y}_{\sigma(i)\sigma(j)}$. However, by considering the respective ideals, we have
\begin{itemize}
\item $\mathcal{Y}_{ij}\cap\mathcal{Y}_{\sigma(i)\sigma(j)}\subset\mathcal{Y}_i$, if $i=\sigma(i)$ or $i=\sigma(j)$
\item $\mathcal{Y}_{ij}\cap\mathcal{Y}_{\sigma(i)\sigma(j)}\subset\mathcal{Y}_j$, if $j=\sigma(j)$ or $j=\sigma(i)$
\end{itemize}
Hence we may assume that $\{i,j\}$ is disjoint from $\{\sigma(i), \sigma(j)\}$. But in this case the intersection $\mathcal{Y}_{ij}\cap\mathcal{Y}_{\sigma(i)\sigma(j)}$ is a point, that is contained in all primary components, which violates again the condition that $\mathcal{Z}$ is not contained in any $\mathcal{Y}_i$. This proves our claim.

After passing to a smaller subset $W$ if necessary, we find by \cref{lemma-fibers} that for all  $y\in \Delta(k) \cap W(k)$ the closed subsets  $\left(\left(\overline{\mathcal{Z}_{\mu\nu}}\right)_y\right)_{\mu,\nu}$  of $\mathscr{C}_y'$ are pairwise different.\par
As already observed before stating the lemma, after possible shrinking $W$, we may assume that for all $y\in \Delta(k)\cap W(k)$, there exists a lift $\underline{a}$ of $y$, such that $\mathfrak{D}^{\underline{a}}$ is the Mustafin model of its generic fiber with special fiber $\mathfrak{D}^{\underline{a}}_k\cong\mathscr{C}'_y$.\par
Now we want to see that after possible shrinking $W$ again, we have that $\left(\left(\overline{\mathcal{Z}_{\mu\nu}}\right)_y\right)_{\mu,\nu}$ contain pairwise different irreducible components of $\mathscr{C}'_y$ for $y\in W$. In order to see this, we first consider the following decomposition into irreducible components
\begin{equation}
\mathscr{C}_{\eta}'=\bigcup_{\mu,\nu} Z_{\mu\nu}\cup\bigcup_{\gamma} \Theta_{\gamma}.
\end{equation}
We observe
\begin{equation}
\mathcal{Z}_{ij}\not\subset\bigcup_{\substack{\mu,\nu\\\{\mu,\nu\}\neq\{i,j\}}}\mathcal{Z}_{\mu\nu}\cup\bigcup_{\gamma}\Theta_{\gamma}
\end{equation}
and thus after possibly shrinking $W$, we have by \cref{lemma-fibers} that
\begin{equation}
\left(\overline{\mathcal{Z}_{ij}}\right)_y\not\subset\bigcup_{\substack{\mu,\nu\\\{\mu,\nu\}\neq\{i,j\}}} \left(\overline{\mathcal{Z}_{\mu\nu}}\right)_y\cup\bigcup_{\gamma}\left(\overline{\Theta_{\gamma}}\right)_y
\end{equation}
and by \cite[36.22.5]{SP} that
\begin{equation}
\mathscr{C}'_y=\bigcup_{\mu,\nu} \left(\overline{\mathcal{Z}_{\mu\nu}}\right)_y\cup\bigcup_{\gamma}\left(\overline{\Theta_{\gamma}}\right)_y
\end{equation}
for all $y\in \Delta(k)\cap W(k)$. Therefore, $\left(\overline{\mathcal{Z}_{ij}}\right)_y$ contains an irreducible component of $\mathscr{C}'_y$, which is not contained in
\begin{equation}
\bigcup_{\substack{\mu,\nu\\\{\mu,\nu\}\neq\{i,j\}}} \left(\overline{\mathcal{Z}_{\mu\nu}}\right)_y\cup\bigcup_{\gamma}\left(\overline{\Omega_{\gamma}}\right)_y
\end{equation}
for all $y\in \Delta(k)\cap W(k)$. By symmetry, after possibly shrinking $W$ again, we have that
\begin{equation}
\left(\left(\overline{\mathcal{Z}_{\mu\nu}}\right)_y\right)_{\mu,\nu}
\end{equation}

contain pairwise different irreducible components of $\mathscr{C}'_y$ for $y\in \Delta(k)\cap W(k)$. For each $\left(\overline{\mathcal{Z}_{ij}}\right)_y$ we pick such a component, which we denote by $\mathscr{Z}_{ij}^y$.
Finally, all irreducible components of $\mathscr{C}'_y$ are $1-$dimensional and therefore $\mathrm{dim}\left(\mathscr{Z}_{ij}^y\right)=1$ for all $y\in \Delta(k)\cap W(k)$.\\
The above considerations show that for general $\underline{a}$, the special fiber of $\mathfrak{D}^{\underline{a}}$ contains at least $\frac{n (n+1)}{2}$ irreducible components. We now prove that this yields a contradiction, which finishes this step of the proof. Recall that we assume $n>2d$.

\begin{itemize}
\item First, for $y\in\Delta(k)\cap W(k)$, we consider the map
\begin{equation}
\omega:\{(i,j)\in\{1,\dots,n+1\}^2\mid i<j,i\neq j\}\to\{1,\dots,n+1\}
\end{equation}
given by
\begin{align}
\omega((i,j))=\begin{cases}
i,\,\textrm{if}\quad p_i\left(\mathscr{Z}_{ij}^y\right)\cong\mathbb{P}^1\\
j,\,\textrm{if}\quad p_i\left(\mathscr{Z}_{ij}^y\right)\cong pt
\end{cases}
\end{align}
Then, we have
\begin{equation}
\bigcup \omega^{-1}(i)=\{(i,j)\in\{1,\dots,n+1\}^2\mid i<j,i\neq j\}
\end{equation}
and thus
\begin{equation}
\sum_i \left|\omega^{-1}(i)\right|\ge \left|\{(i,j)\in\{1,\dots,n+1\}^2\mid i<j,i\neq j\}\right|=\binom{n+1}{2}=\frac{n(n+1)}{2}.
\end{equation}
By assumption we have $n(n+1)>2d^2$, hence there exists $i\in\{1,\dots,n+1\}$ with $\left|\omega^{-1}(i)\right|>d$ as otherwise $\frac{n(n+1)}{2}\le\sum \left|\omega^{-1}(i)\right|\le (n+1) \cdot d<\frac{n(n+1)}{2}$, which is a contradiction. 
\item We first observe that for all $y\in \Delta(k)\cap W(k)$, we have $\mathscr{Z}_{ij}^y\subset\left(\overline{Y_{ij}}\right)_y$, $\mathrm{dim}\mathscr{Z}_{ij}^y=1$ with $\left(\overline{Y_{ij}}\right)_y$ being the secondary component of the respective Mustafin variety projecting to $\mathbb{P}^1$ via $p_i$ and $p_j$ and to a point via $p_l$ ($l\neq i,j$). Thus, we have $p_i\left(\mathscr{Z}_{ij}^y\right)\cong\mathbb{P}^1$ or $p_j\left(\mathscr{Z}_{ij}^y\right)\cong\mathbb{P}^1$.\par
Secondly, we observe that the Chow class of $\left(\mathscr{Z}_{ij}^y\right)$ is given by 
\begin{equation}
\alpha^{ij}H_i\prod_{l\neq i}H_l^2+\beta^{ij}H_j\prod_{l\neq j}H_l^2
\end{equation}
with $\alpha^{ij},\beta^{ij}\in\mathbb{Z}_{\ge0}$, since it is a component of a curve inside a secondary component projecting to $\mathbb{P}^1_k$ via the projections $p_i$ and $p_j$. Moreover, we note that if $\omega((i,j))=i$ (resp. $\omega((j,i))=j$), then we have $\alpha^{ij}\neq 0$. Now, we choose $i$, such that $\left|\omega^{-1}(i)\right|> d$ and denote by $\Omega_i$ the set of all $j$, such that $j\neq i$ and either $(i,j)\in\omega^{-1}(i)$ or $(j,i)\in\omega^{-1}(i)$. Therefore, there are at least $d+1$ irreducible distinct subsets $\left(\mathscr{Z}_{ij}^y\right)$, such that  $p_i\left(\mathscr{Z}_{ij}^y\right)\cong\mathbb{P}^1$.
Then we see that the Chow class of
\begin{equation}
\bigcup_{\substack{i<j\\p_i\left(\mathscr{Z}_{ij}^y\right)\cong\mathbb{P}^1}}\left(\mathscr{Z}_{ij}^y\right)\subset \mathfrak{C}^{\underline{a}}_k\quad \textrm{(for $\underline{a}$ a lift of $y$)}
\end{equation}
contains
\begin{equation}
\sum_{j\in\Omega_i}\alpha^{ij}H_i\prod_{l\neq i}H_l^2.
\end{equation}
However, we see that
\begin{equation}
\sum_{j\in\Omega_i}\alpha^{ij}\ge|\Omega_i|\ge d+1
\end{equation}
as each $\alpha^{ij}$ in the sum is a positive. This is a contradiction to the fact that the Chow class of $\mathfrak{C}^{\underline{a}}_k$ is $d$ times the sum over all monomials of degree $2(n+1)-1$.
\end{itemize}

Thus, we have obtained a contradition to our assumption and conclude that there exists some $i\in\{1,\dots,n+1\}$, such that $\mathcal{Z}\subset\mathcal{Y}_i$. \par

Thus, it follows from \cref{lemma-fibers} that there exists an open subset $W\subset\mathrm{Spec}(\mathcal{R}')$, such that for each  $y\in W(k)$ all irreducible components of $\mathscr{C}_y'$ are contained in primary components of $\mathcal{N}'(\Gamma')_y$. 

(3) We have proved the result for $n>2d$ in (2). In order to deduce the result for $n\le 2d$, we consider $m$ with $m>2d$. For an $m+1-$tuple $\underline{a}\in\Delta(R)$ of coefficients, we denote by $\underline{a}_0$ its projection onto the first $n+1$ entries. Let $g_1,\dots,g_{m+1}$ be matrices given by $\underline{a}$, such that $g_1,\dots,g_{n+1}$ are given by $\underline{a}_0$. Thus, we obtain Mustafin varieties $\mathcal{M}_{\underline{a_0}}(\{L_1,\dots,L_{n+1}\})$ and $\mathcal{M}_{\underline{a}}(\{L_1,\dots,L_{m+1}\})$. By embedding $C$ into the respective Mustafin varieties and taking the closure, we obtain flat models $\mathfrak{C}^{\underline{a}_0}\subset\mathcal{M}_{\underline{a}_0}(\{L_1,\dots,L_{n+1}\})$ and $\mathfrak{C}^{\underline{a}}\subset\mathcal{M}_{\underline{a}}(\{L_1,\dots,L_{m+1}\})$ such that  the projection on the first $n+1$ factors of $\mathbb{P}(L)^{m+1}$ maps $\mathfrak{C}^{\underline{a}}_{k}\subset\mathcal{M}_{\underline{a}}(\{L_1,\dots,L_{m+1}\})_{k}$  onto $\mathfrak{C}^{\underline{a}_0}_{k}\subset\mathcal{M}_{\underline{a}_0}(\{L_1,\dots,L_{n+1}\})_{k}$.\par
We now consider a general choice of $\underline{a}$ and therefore of $\underline{a}_0$. More precisely,  for $m$ we fix the open dense subset $W$ obtained in (2), such that for each $y\in W(k)$ all irreducible components of $\mathscr{C}_y'$ are contained in primary components of $\mathcal{N}'(\Gamma')_y$. For $y \in W(k)$, we denote the projection to the coordinates $A_{ij}^{(l)}$ with $l \leq n+1$ by $y_0$. Choose a lift $\underline{a}$ of $y$ with projection $\underline{a}_0$ to the coordinates $a_{ij}^{(l)}$ for $l \leq n+1$, i.e. $\underline{a}_0$ is a lift of $y_0$. Then each component of $\mathfrak{C}^{\underline{a}}_{k}\,(\cong \mathscr{C}'_y)$ is contained in a primary component. The projection onto the first $n+1$ factors either maps such  a component to a point or to an irreducible component, which will also be contained in a primary component of  $\mathcal{M}_{\underline{a}_0}(\{L_1,\dots,L_{n+1}\})_{k}$. Thus, the lemma follows.
\end{proof}

We are finally ready to proof \cref{thm:spec}.

\begin{proof}[Proof of {\cref{thm:spec}}]
We consider the irreducible homogeneous polynomial $f$ defining $C\subset\mathbb{P}^2_{K}$. As irreducibility of polynomials is preserved under purely transcendental field extension, it induces a homogeneous irreducible polynomial in $\mathcal{K}[y_1,y_2,y_3]$. This defines a subscheme $\mathcal{C}\subset\mathbb{P}(\mathcal{V})$ and we obtain a scheme $\mathscr{C}$ over $\mathrm{Spec}(\mathcal{R})$ with generic fiber $\mathcal{C}$ by embedding $\mathcal{C}$ into $\mathcal{N}(\Gamma')$ and taking the closure. Then, we have $\Lambda_{\underline{a}}^{\ast}\mathscr{C}\cong C$ for all $\underline{a} \in \Delta(R)$, and thus the condition in \cref{lem:step2} is satisfied. By construction, we have $\tau_{\sigma}(I(\mathcal{C}))=I(\mathcal{C})$. Then, it follows from \cref{lem:step2} that for a general choice of $\underline{a}$ all components of $\mathfrak{C}^{\underline{a}}_{k}$ lie in the primary components of $\mathcal{M}_{\underline{a}}(\Gamma)_k$. Hence the theorem follows from \cref{lem:step1}. 
\end{proof}

\section{Models of syzygy bundles}
\label{sec:plane}
After studying degenerations of plane curves in the preceeding section, we will now study degenerations of syzygy bundles on curves.

Recall that $K$ is a discretely valued field with ring of integers $R$ and perfect residue field $k$.\par 
We fix two positive integers $n\geq 2$ and $\rho$ and  non-negative numbers $d_1,\dots,d_{n+1}\le \rho$ with $\sum_{j = 1}^{n+1} d_j=n\rho$. Let $F_1,\dots,F_{n+1}$ be polynomials with
\begin{equation}
F_i\in\mathrm{Sym}_{j\neq i}R[x_{1j},x_{2j},x_{3j}]^{(\rho-d_j)},
\end{equation}
where $R[x_{1j},x_{2j},x_{3j}]^{(\rho-d_j)}$ denotes the $R$-submodule of the polynomial ring consisting of all homogenous polynomials of degree $\rho - d_j$. Hence $F_i$ is a linear combinattion of products of $n$ homogenous polynomials, each in a different set of variables. 

We define a coherent sheaf $\mathcal{E}'$ on $\mathcal{M}_{\underline{a}}(\Gamma)$ by the exact sequence
\begin{equation}
\label{equ:cohsheaf1}
0\to \mathcal{E}'\to \bigoplus_{j=1}^{n+1}p_j^\ast\mathcal{O}_{\mathbb{P}(L)}(\rho-d_j)\xrightarrow{(F_1,\dots,F_{n+1})} \bigotimes_{j=1}^{n+1}p_j^\ast\mathcal{O}_{\mathbb{P}(L)}(\rho-d_j),
\end{equation}
which is well-defined since
\begin{align}
F_i\in \bigotimes_{j\neq i}\Gamma(\mathcal{M}_{\underline{a}}(\Gamma),p_j^\ast\mathcal{O}_{\mathbb{P}(L)}(\rho-d_j)).
\end{align}

Then $\cE'$ is a coherent sheaf on $\mathcal{M}_{\underline{a}}(\Gamma)$. For any choice of coefficients $\left(c_{ij}^{(l)}\right)_{\substack{i,j=1,2,3;\\l=1,\dots,n+1}}\in R^{9(n+1)} \cap \Delta(R)$  we consider the linear forms
\begin{equation}
z_{il}=c_{i1}^{(l)}x_{1l}+c_{i2}^{(l)}x_{2l}+c_{i3}^{(l)}x_{3l}.
\end{equation}
Then $z_{il} = p_l^\ast ( z_i^{(l)})$ for $z_{i}^{(l)}=c_{i1}^{(l)}x_{1}+c_{i2}^{(l)}x_{2}+c_{i3}^{(l)}x_{3}$.
 Let $D(z_{il})=p_l^{-1}(D_+(z_i^{(l)}))$ and define the open subset
\begin{equation}
U'_{\underline{c}}=\bigcap_{i,j} D(z_{ij})
\end{equation}
of $\mathcal{M}_{\underline{a}}(\Gamma)$. Then there exist $\beta_{ij}\in\mathcal{O}^{\times}(U'_{\underline{c}})$, such that $\beta_{ij}z_{1j}=z_{ij}$ on $U'_{\underline{c}}$. Thus, we have
\begin{equation}
\restr{p_j^\ast\mathcal{O}(\rho-d_j)}{U'_{\underline{c}}}=z_{1j}^{\rho-d_j}\restr{\mathcal{O}}{U'_{\underline{c}}}.
\end{equation}
Then we can express $F_i$ on $U'_{\underline{c}}$ as
\begin{equation}
F_i=H_i\prod_{\substack{j=1\\j\neq i}}^{n+1}z_{1j}^{\rho-d_j},
\end{equation}
where $H_i=H_i((\beta_{lj})_{lj})\in\mathcal{O}(U'_{\underline{c}})$ is a polynomial in the $(\beta_{lj})_{lj}$. Note that all $H_i\not\equiv0$ because $F_i\not\equiv 0$. Hence
\begin{equation}
U''_{\underline{c},i}=\{z\in\mathcal{M}_{\underline{a}}(\Gamma)\mid (H_i)_z\in\mathcal{O}^{\times}_{\mathcal{M}_{\underline{a}}(\Gamma),z}\}
\end{equation}
is a non-empty open subset of $\mathcal{M}_{\underline{a}}(\Gamma)$. We put 
\begin{equation}
U_{\underline{c},i}=U'_{\underline{c}}\cap U''_{\underline{c},i}.
\end{equation}
We will now show that $\cE'$ is trivial on the non-empty open set $U_{\underline{c},i}\subset\mathcal{M}_{\underline{a}}(\Gamma)$.
\begin{lemma}
\label{lem:free}
For $\underline{c}$ and $U_{\underline{c},i}$ defined as above, we have the following:
\begin{enumerate}
\item The map $\varphi:\bigoplus_{j=1}^{n+1}\restr{p_j^\ast\mathcal{O}(\rho-d_j)}{U_{\underline{c},i}}\xrightarrow{(F_1,\dots,F_{n+1})}\bigotimes_{j=1}^{n+1}\restr{p_j^\ast\mathcal{O}(\rho-d_j)}{U_{\underline{c},i}}$ is surjective.
\item The sheaf $\cE'$ is trivial on $U_{\underline{c},i}$, i.e. we have $\restr{\cE'}{U_{\underline{c},i}}\cong\restr{\mathcal{O}^n}{U_{\underline{c},i}}$.
\end{enumerate}
\end{lemma}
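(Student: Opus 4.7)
The plan is to work entirely in the local trivialisations of the line bundles $p_j^\ast \mathcal{O}(\rho-d_j)$ that are already built into the definition of $U_{\underline{c},i}$. On the open set $U'_{\underline{c}} \supset U_{\underline{c},i}$ each linear form $z_{1j}$ is invertible, so $z_{1j}^{\rho-d_j}$ is a nowhere-vanishing section of $p_j^\ast \mathcal{O}(\rho-d_j)$ and trivialises this pullback line bundle. Consequently the target $\bigotimes_{j=1}^{n+1} p_j^\ast \mathcal{O}(\rho-d_j)$ is trivialised on $U_{\underline{c},i}$ by $\prod_{j=1}^{n+1} z_{1j}^{\rho-d_j}$, and each summand of the source is trivialised by $z_{1j}^{\rho-d_j}$.

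In these trivialisations, I want to read off the map $\varphi$ componentwise. A local section of the $j$-th summand is of the form $f \cdot z_{1j}^{\rho-d_j}$ for some $f\in\mathcal{O}(U_{\underline{c},i})$; using the expression $F_j = H_j \prod_{l \neq j} z_{1l}^{\rho-d_l}$ valid on $U_{\underline{c},i}$, multiplication by $F_j$ sends it to $f H_j \prod_{l=1}^{n+1} z_{1l}^{\rho-d_l}$. Hence $\varphi$ becomes, in the trivialised picture, the row vector $(H_1,\dots,H_{n+1})$ acting $\mathcal{O}^{n+1} \to \mathcal{O}$. By the very definition of $U''_{\underline{c},i}$ the coefficient $H_i$ is a unit on $U_{\underline{c},i}$, so the $i$-th component of this row vector is invertible and the map is surjective. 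This proves part (1).

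For part (2) I will exhibit an explicit splitting: define $\sigma: \mathcal{O} \to \mathcal{O}^{n+1}$ by placing $H_i^{-1}$ in the $i$-th slot and zero elsewhere. Then $\varphi \circ \sigma = \mathrm{id}$, so the short exact sequence
\begin{equation*}
0 \longrightarrow \cE'|_{U_{\underline{c},i}} \longrightarrow \bigoplus_{j=1}^{n+1} p_j^\ast \mathcal{O}(\rho-d_j)\big|_{U_{\underline{c},i}} \xrightarrow{\varphi} \bigotimes_{j=1}^{n+1} p_j^\ast \mathcal{O}(\rho-d_j)\big|_{U_{\underline{c},i}} \longrightarrow 0
\end{equation*}
splits, and the kernel is identified with the direct sum of the $n$ remaining summands $\bigoplus_{j \neq i} p_j^\ast \mathcal{O}(\rho-d_j)|_{U_{\underline{c},i}}$. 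Since each of these line bundles is free of rank one on $U_{\underline{c},i}$, one obtains $\cE'|_{U_{\underline{c},i}} \cong \mathcal{O}^n$.

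There is essentially no serious obstacle: the real content was packed into the definition of $U_{\underline{c},i}$, which was chosen precisely so that the $z_{1j}$ trivialise the line bundles and so that $H_i$ is invertible. The only thing to be slightly careful about is making sure that after changing trivialisation the map is literally multiplication by the unit $H_i$ on the $i$-th factor, but this is immediate from the formula $F_i = H_i \prod_{j \neq i} z_{1j}^{\rho-d_j}$ derived just before the lemma.
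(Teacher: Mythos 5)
Your proof is correct and follows essentially the same route as the paper: trivialise each $p_j^\ast\mathcal{O}(\rho-d_j)$ by $z_{1j}^{\rho-d_j}$, observe that $\varphi$ becomes $(H_1,\dots,H_{n+1})$ with $H_i$ a unit, and conclude surjectivity and freeness of the kernel; the paper merely states this more tersely. The only slight looseness is the phrase that the kernel ``is identified with the direct sum of the $n$ remaining summands'' --- those summands are not literally contained in the kernel, but the split exact sequence does give $\cE'|_{U_{\underline{c},i}}\cong\mathcal{O}^{n+1}/\im(\sigma)\cong\mathcal{O}^n$ as you intend.
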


\begin{proof}
Since on $U_{\underline{c},i}$ the morphism $F_i: \restr{p_i^\ast\mathcal{O}(\rho-d_i)}{U_{\underline{c},i}} \rightarrow \bigotimes_{j=1}^{n+1}\restr{p_j^\ast\mathcal{O}(\rho-d_j)}{U_{\underline{c},i}}$ is surjective, we find that $\varphi$ is surjective. 
Note that we have $ \restr{p_j^\ast\mathcal{O}(\rho-d_j)}{U_{\underline{c},i}} \simeq z_{1 j}^{\rho- d_j} \restr{\mathcal{O}}{U_{\underline{c},i}}$,
hence the second claim follows easily.
\end{proof}

For a fixed choice 
\begin{equation}
\underline{a}=\left(a_{ij}^{(l)}\right)_{i,j=1,2,3;l=1,\dots,n+1}\in \Delta(K) \cap R^{9(n+1)}
\end{equation}
and a plane curve $C\subset\mathbb{P}^2_{K}$, let $\mathfrak{C}^{\underline{a}}$ be the associated Mustafin model of $C$ as defined by  \cref{equ:mustmod}.

Now, we fix $d_i$ as above 
and consider the $R$-linear ring homomorphism

\begin{align}
\label{equ:morphwell}
\begin{split}
\Upsilon_i^{\underline{a}}:\mathrm{Sym}_{j\neq i}R[x_{1j},x_{2j},x_{3j}]^{(\rho-d_j)}&\to K[x_1,x_2,x_3]^{(d_i)}
\end{split}
\end{align}

induced by

\begin{equation}
\begin{pmatrix}
x_{1j}\\
x_{2j}\\
x_{3j}
\end{pmatrix}
\mapsto g_j^{-1}
\begin{pmatrix}
x_1\\
x_2\\
x_3
\end{pmatrix}.
\end{equation}
Note that \cref{equ:morphwell} is well-defined due to the fact that $\sum_{j=1}^{n+1} d_j=n\rho$, which yields $d_i=\sum_{j\neq i}(\rho-d_j)$.

 We denote by $\Sigma_i$ the subset of $\mathrm{Sym}_{j\neq i}R[x_{1j},x_{2j},x_{3j}]^{(\rho-d_j)}$ of polynomials $F$, such that the saturation $F'$ reduces to a polynomial $\overline{F}'$ modulo $\mathfrak{m}$ which satisfies
\begin{equation}
\overline{F}'\not\in \langle (x_{1j},x_{2j})_{j=1,\dots,n+1}\rangle.
\end{equation}

\begin{definition}
Let $f_1,\dots,f_{n+1}$ be polynomials with $f_i \in K[x_1,x_2,x_3]^{(d_i)}$ with degrees $d_i$ as above. Then we  say that the tuple $(f_1,\dots,f_{n+1})$ is \textit{$(\underline{d},\underline{a})-$admissible} if $f_i\in\Upsilon_i^{\underline{a}}(\Sigma_i)$.
\end{definition}

We are now ready to state our second main theorem.

\begin{theorem}
\label{thm:plane}
We fix natural numbers $n\ge2$ and $\rho$ and non-negative integers $d_1,\dots,d_{n+1}\leq \rho$, such that $\sum d_i=n\rho$. Furthermore, let $f_1,\dots,f_{n+1}$ be polynomials with $f_i \in K[x_1,x_2,x_3]^{(d_i)}$. \par 
Let $C\subset\bigcup_{i=1}^{n+1} D_+(f_i)\subset\mathbb{P}^2_{K}$ be a smooth plane curve, and let $\underline{a} \in \Delta(R) \cap R^{9(n+1)}$ be a choice of coefficients, such that $\mathfrak{C}^{\underline{a}}$ has star-like reduction and such that $(f_1,\dots,f_{n+1})$ is $(\underline{d},\underline{a})-$admissible.\par
Then there exists a vector bundle $\cE$  on $\mathfrak{C}^{\underline{a}}$ with generic fiber $E=\restr{\mathrm{Syz}(f_1,\dots,f_{n+1})(\rho)  }{C}$ whose special fiber is trivial on all reduced irreducible components of  $\mathfrak{C}^{\underline{a}}_k$. 
\end{theorem}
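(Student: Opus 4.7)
The plan is to take $\cE$ to be the restriction to $\mathfrak{C}^{\underline{a}}$ of the coherent sheaf $\cE'$ on $\mathcal{M}_{\underline{a}}(\Gamma)$ defined by \cref{equ:cohsheaf1} for a carefully chosen tuple $(F_1,\dots,F_{n+1})$, and then to verify local freeness and fibrewise triviality by combining the local triviality statement of \cref{lem:free} with the star-like structure of $\mathfrak{C}^{\underline{a}}_k$. Using the admissibility hypothesis, I would choose saturated polynomials $F_i\in\Sigma_i$ so that $\Upsilon_i^{\underline{a}}(F_i)$ is a nonzero $K$-scalar multiple of $f_i$. Replacing each $f_i$ by such a multiple only changes $\mathrm{Syz}(f_1,\dots,f_{n+1})(\rho)|_C$ by an isomorphism, so we may assume equality. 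I then define $\cE'$ via \cref{equ:cohsheaf1} with this choice and set $\cE:=\restr{\cE'}{\mathfrak{C}^{\underline{a}}}$. The identification $\cE_K\cong E$ is then formal: pulling back along the diagonal embedding sends each $p_j^\ast\mathcal{O}(\rho-d_j)$ to $\mathcal{O}_{\mathbb{P}^2_K}(\rho-d_j)$ and each $F_j$ to $f_j$, and the identity $\sum_j(\rho-d_j)=\rho$ collapses the tensor product to $\mathcal{O}_{\mathbb{P}^2_K}(\rho)$, so the defining sequence of $\cE'$ restricts on $C$ to the defining sequence of $\mathrm{Syz}(f_1,\dots,f_{n+1})(\rho)|_C$.

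The heart of the argument will be to show that the map $\phi=(F_1,\dots,F_{n+1})$ is surjective at every point of $\mathfrak{C}^{\underline{a}}$. On the generic fiber this is immediate from $C\subset\bigcup_i D_+(f_i)$. On the special fiber, consider a point $p$ on some component $D_i$. By the star-like structure and the ideal description \cref{equ:comp}, every projection $p_l$ with $l\neq i$ is constant on $D_i$ with image $[0:0:1]\in\mathbb{P}(L)_k$. Hence $\restr{F_i}{D_i}$ equals the value of $\overline{F}_i$ at $x_{1j}=x_{2j}=0$, $x_{3j}=1$ for all $j\neq i$, which is precisely the coefficient of the pure monomial $\prod_{j\neq i}x_{3j}^{\rho-d_j}$ in $\overline{F}_i$. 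The admissibility condition defining $\Sigma_i$ is exactly that this coefficient is nonzero, so $F_i$ is nowhere-vanishing on $D_i$, and $\phi$ is surjective at $p$. This yields a short exact sequence of locally free sheaves
\begin{equation*}
0\longrightarrow\cE\longrightarrow\bigoplus_{j=1}^{n+1}\restr{p_j^\ast\mathcal{O}(\rho-d_j)}{\mathfrak{C}^{\underline{a}}}\longrightarrow\restr{\textstyle\bigotimes_{j=1}^{n+1}p_j^\ast\mathcal{O}(\rho-d_j)}{\mathfrak{C}^{\underline{a}}}\longrightarrow 0,
\end{equation*}
and in particular $\cE$ is a vector bundle of rank $n$.

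Restricting this sequence to $D_i$ preserves exactness since the outer terms are locally free. For $l\neq i$ the map $p_l|_{D_i}$ is constant, so $\restr{p_l^\ast\mathcal{O}(\rho-d_l)}{D_i}\cong\mathcal{O}_{D_i}$, while $p_i|_{D_i}$ is a linear embedding $D_i\cong\mathbb{P}^1_k\hookrightarrow\mathbb{P}(L)_k$, giving $\restr{p_i^\ast\mathcal{O}(\rho-d_i)}{D_i}\cong\mathcal{O}_{D_i}(\rho-d_i)$; the tensor product restricts to $\mathcal{O}_{D_i}(\rho-d_i)$ for the same reason. The restricted sequence therefore reads
\begin{equation*}
0\longrightarrow\restr{\cE}{D_i}\longrightarrow\mathcal{O}_{D_i}^n\oplus\mathcal{O}_{D_i}(\rho-d_i)\longrightarrow\mathcal{O}_{D_i}(\rho-d_i)\longrightarrow 0,
\end{equation*}
and its restriction to the $\mathcal{O}_{D_i}(\rho-d_i)$-summand is multiplication by the nonzero constant $\restr{F_i}{D_i}$, hence an isomorphism. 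The sequence therefore splits and $\restr{\cE}{D_i}\cong\mathcal{O}_{D_i}^n$, which is what is claimed.

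The hard part will be the surjectivity step in the special fiber, in particular at the unique star-point shared by all $D_i$. The admissibility assumption is engineered precisely to handle this: it ensures that each saturated $\overline{F}_i$ retains a pure-$x_{3j}$ monomial, and this monomial controls the behaviour of $F_i$ on $D_i$. Once surjectivity is in place, both the local freeness of $\cE$ and the splitting on each component follow mechanically from the left-exactness of the restricted sequence and the fact that $\restr{F_i}{D_i}$ is a nonzero constant.
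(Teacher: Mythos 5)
Your proposal is correct and follows essentially the same route as the paper: choose saturated representatives $F_i\in\Sigma_i$, define $\cE$ as the restriction of the kernel sheaf $\cE'$ from \cref{equ:cohsheaf1}, use admissibility to see that $\overline{F}_i$ restricts to a nonzero constant on $D_i$ (since $p_l|_{D_i}$ is constant equal to $[0:0:1]$ for $l\neq i$), and conclude triviality of $\restr{\cE}{D_i}$ from the resulting split surjection onto $\mathcal{O}_{D_i}(\rho-d_i)$. The only cosmetic difference is that you verify surjectivity of $(F_1,\dots,F_{n+1})$ pointwise on $\mathfrak{C}^{\underline{a}}$ directly, whereas the paper packages this via the open sets $U_{\underline{c},i}$ and \cref{lem:free}.
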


\begin{remark}
As explained in \cref{sec:pre}, this  implies that the bundle $E=\restr{\mathrm{Syz}(f_1,\dots,f_{n+1})(\rho)  }{C}$ is semistable of degree $0$ on $C$.
\end{remark}

\begin{proof}
By assumption, we find $F_i\in \Sigma_i$, such that $\Upsilon_i^{\underline{a}}(F_i)=f_i$. As 
\begin{equation}
\mathrm{Syz}(f_1,\dots,f_{n+1})\cong\mathrm{Syz}(\alpha_1f_1,\dots,\alpha_{n+1}f_{n+1})
\end{equation}
for $\alpha_i\in K^{\times}$ and $\Upsilon_i^{\underline{a}}(tF_i)=tf_i$, we may assume that $F_i$ is saturated with respect to $t$.

We observe $(g_1^{-1},\dots,g_{n+1}^{-1})^\ast F_i=f_i$ and recall that we chose $ C\subset\bigcup D_+(f_i)$. Furthermore, for each $j\in\{1,\dots,n+1\}$ we consider the following commutative diagram

\begin{equation}
\begin{tikzcd}
\mathbb{P}^2_K \arrow{dd}[swap]{g_j^{-1}}\arrow{rr}{(g_1^{-1},\dots,g_{n+1}^{-1})} \arrow[hookrightarrow]{dr} & & \prod_{i=1}^{n+1}\mathbb{P}(L) \arrow{dd}{p_j}\\
 &  \mathcal{M}_{\underline{a}}(\Gamma) \arrow{dr}{p_j} \arrow[hookrightarrow]{ur}{\textrm{emb.}}& \\
\mathbb{P}^2_K \arrow{rr}{\textrm{gen. fiber}}& & \mathbb{P}(L) 
\end{tikzcd}
\end{equation}

This implies that the generic fiber of the coherent sheaf $\cE'$ on $\mathcal{M}_{\underline{a}}(\Gamma)$ defined in \cref{equ:cohsheaf1}  by
\begin{equation}
0\to \mathcal{E}'\to \bigoplus_{j=1}^{n+1}p_j^\ast\mathcal{O}_{\mathbb{P}(L)}(\rho-d_j)\xrightarrow{(F_1,\dots,F_{n+1})} \bigotimes_{j=1}^{n+1}p_j^\ast\mathcal{O}_{\mathbb{P}(L)}(\rho-d_j),
\end{equation}
 is isomorphic to $\mathrm{Syz}(f_1,\dots,f_{n+1})(\rho)$ and in particular locally free on $\bigcup_{i=1}^{n+1} D_+(f_i)$.
  
 We define the coherent sheaf $\cE$ on $\mathfrak{C}^{\underline{a}}$ as the pullback of $\cE'$ via the embedding $\mathfrak{C}^{\underline{a}} \hookrightarrow \mathcal{M}_{\underline{a}}(\Gamma)$. Then $\cE$ has generic fiber $E= \restr{\mathrm{Syz}(f_1,\dots,f_{n+1})(\rho)  }{C}$, in particular, it is locally free on the generic fiber $C$ of  $\mathfrak{C}^{\underline{a}}$. Let us now consider points in the special fiber of $\left(\mathfrak{C}^{\underline{a}}\right)_k$ which is the union of $(n+1)$ irreducible components $C_i$. Recall that by star-like reduction we have $D_i = C_i^{\mathrm{red}} \simeq  \mathbb{P}^1_k$. As usual, we identify $D_i$ with its image in $\mathcal{M}_{\underline{a}}(\Gamma)$. Then  $D_i $ is given by the ideal 
\begin{equation}
\langle t,x_{1i},(x_{1l},x_{2l})_{l\neq i}\rangle.
\end{equation}
As $x_{1i},(x_{1l},x_{2l})_{l\neq i}$ vanish on $D_i$, we have $x_{3l}\neq 0$ for $l\neq i$. Therefore
\begin{equation}
z_{jl}=c_{j1}^{(l)}x_{1l}+c_{j2}^{(l)}x_{2l}+c_{j3}^{(l)}x_{3l}
\end{equation}
is equal to $\overline{c_{j3}}^{(l)}x_{3l}$ on $D_i$, which is non-zero for  $l\neq i$ and general choices of $c_{j3}^{(l)}$ in $R$. Thus, we obtain $D_i\subset\bigcap_{l \neq i}D(z_{jl})$.
Moreover, for any point $(x_{1i}:x_{2i}:x_{3i})\in D_i$, we have $(x_{1i}:x_{2i}:x_{3i})=(0:\eta_1^i:\eta_2^i)$ with $\eta_1^i\neq0$ or $\eta_2^i\neq0$. Therefore
\begin{equation}
z_{ji} = c_{j1}^{(i)}x_{1i}+c_{j2}^{(i)}x_{2i}+c_{j3}^{(i)}x_{3i}
\end{equation}
satisfies
$z_{ji}(0,\eta_1^i,\eta_2^i)=\overline{c_{j2}}^{(i)}\eta_1^i+\overline{c_{j3}}^{(i)}\eta_2^i\neq 0$ for general choices of $c_{j2}^{(i)},c_{j3}^{(i)}$ in $R$. \par

Hence every point in $D_i$ lies in an open neighbourhood of the form $U'_{\underline{c}}$ for a suitable (general) choice of $\underline{c}$. 

Recall that we have
\begin{equation}
F_i\in R\left[\left(x_{jl}\right)_{\substack{j=1,2,3;\\l\neq i}}\right]\quad\textrm{and}\quad \overline{F}_i \notin  \langle \left(x_{1l},x_{2l}\right)_{l\neq i}\rangle.
\end{equation}
As $x_{1l}=x_{2l}=0$ for $l\neq i$ on $D_i$, we have $\overline{F_i}_{|D_i}=\alpha \prod_{l\neq i}x_{3l}^{\rho - d_l}$ with $\alpha\neq 0$. As $x_{3l}\neq 0$ on $D_i$, we therefore obtain that $\overline{F_i}_{|D_i}$ is  a non-zero constant and thus
\begin{equation}
D_i\subset \{z\in\mathcal{M}_{\underline{a}}(\Gamma)\mid (F_i)_z\in\mathcal{O}^\times_{\mathcal{M}(\Gamma),z}\}\subset\{z\in\mathcal{M}_{\underline{a}}(\Gamma)\mid (H_i)_z\in\mathcal{O}^\times_{\mathcal{M}(\Gamma),z}\}=U_{\underline{c},i}''.
\end{equation}
This proves, that for every point in the special fiber $\mathfrak{C}^{\underline{a}}_k$ there exists a neighbourhood of the form $U_{\underline{c},i} =  U'_{\underline{c}} \cap U_{\underline{c},i}''$ in $\mathcal{M}_{\underline{a}}(\Gamma)$. Hence by \cref{lem:free} we find that $\mathfrak{C}^{\underline{a}}$ is contained in the locus of points in $\mathcal{M}_{\underline{a}}(\Gamma)$ where the morphism $\varphi$ is a surjective morphism of vector bundles with locally free kernel $\cE'$. Hence the pullback $\cE$ of $\cE'$ to $\mathfrak{C}^{\underline{a}}$ is a vector bundle with generic fiber $E$ sitting inside the short exact sequence 
\begin{equation}
0\to \cE \to \bigoplus_{j=1}^{n+1}\restr{p_j^\ast\mathcal{O}_{\mathbb{P}(L)}(\rho-d_j)}{\mathfrak{C}^{\underline{a}}}\xrightarrow{(F_1,\dots,F_{n+1})|_{\mathfrak{C}^{\underline{a}}}} \bigotimes_{j=1}^{n+1}\restr{p_j^\ast\mathcal{O}_{\mathbb{P}(L)}(\rho-d_j)}{\mathfrak{C}^{\underline{a}}} \to 0
\end{equation}
of vector bundles on $\mathfrak{C}^{\underline{a}}$.

Let us now study the restriction of $\cE$ to the reduced component $D_i$ in the special fiber. 
Let 
\begin{equation}
\restr{p_j}{D_i}:D_i\to\mathbb{P}_k^2
\end{equation}
be the projection map to the $j-$th component restricted to $D_i$ and observe that $\restr{p_j}{D_i}:D_i\xrightarrow{\sim}\mathbb{P}_k^1\subset\mathbb{P}^2_k$ by the assumption that $\mathfrak{C}^{\underline{a}}$ has star-like reduction. Moreover, we have $\restr{p_j}{D_i}(D_i)\cong pt$ for $i\neq j$. Thus, for $m\in\mathbb{Z}$ we have
\begin{equation}
\label{lem:bundles}
\restr{p_i}{D_i}^\ast(\mathcal{O}_{\mathbb{P}^2_k}(m))\cong \mathcal{O}_{\mathbb{P}^1_k}(m)\quad \textrm{and}\quad \restr{p_j}{D_i}^\ast(\mathcal{O}_{\mathbb{P}^2_k}(m))\cong \mathcal{O}_{\mathbb{P}^1_k}\quad\textrm{for}\quad j\neq i.
\end{equation}

Now $\restr{\cE}{D_i}$ is given as the kernel of the morphism
\begin{equation}
\label{equ:specfi}
\restr{\varphi_k}{D_i}:\bigoplus_{j=1}^{n+1}\restr{p_j^\ast\mathcal{O}_{\mathbb{P}(L)}(\rho-d_j)}{D_i}\xrightarrow{\restr{(\overline{F}_1,\dots,\overline{F}_{n+1})}{D_i}} \bigotimes_{j=1}^{n+1}\restr{p_j^\ast\mathcal{O}_{\mathbb{P}(L)}(\rho-d_j)}{D_i},
\end{equation}
which boils down to 

\begin{align}
\mathcal{O}_{\mathbb{P}^1_k}^{i-1}\oplus O_{\mathbb{P}^1_k}(\rho-d_i) \oplus \mathcal{O}_{\mathbb{P}^1_k}^{n-i}&\xrightarrow{} \mathcal{O}_{\mathbb{P}^1_k}(\rho-d_i)\\
(A_1,\dots,A_{n+1})&\mapsto \sum A_j\cdot \restr{\overline{F}_j}{D_i}.
\end{align}
Recall that $\restr{\overline{F}_j}{D_i}$ is a degree $\rho-d_i$ polynomial in $x_{2i},x_{3i}$ for $j\neq i$ and $\restr{\overline{F}_i}{D_i}$ is a non-zero constant polynomial. This implies that $\restr{\cE}{D_i}$ is isomorphic to $\mathcal{O}_{\mathbb{P}^1_k}^n$, which proves our theorem.

\end{proof}

This theorem immediately implies the following result.

\begin{corollary} Assume that $K$ is contained in $\overline{\mathbb{Q}}_p$ and $n\ge2$. Let  $f_i \in K[x_1,x_2,x_3]^{(d_i)}$ be $n+1$ polynomials of homogenous degrees $d_i \leq \rho$ satisfying $\sum d_i=n\rho$. Consider a connected smooth plane curve  $C$ contained in $\bigcup_{i=1}^{n+1} D_+(f_i)\subset\mathbb{P}^2_{K}$ and assume that there exists a choice of matrix coefficients $\underline{a} \in \Delta(R) \cap R^{9(n+1)}$, such that $\mathfrak{C}^{\underline{a}}$ has star-like reduction and such that $(f_1,\dots,f_{n+1})$ is $(\underline{d},\underline{a})-$admissible.\par
Then the base change of the syzygy bundle $E=\restr{\mathrm{Syz}(f_1,\dots,f_{n+1})(\rho)  }{C}$  to  $\mathbb{C}_p$ has strongly semistable reduction in the sense of \cref{def:strongred}.
\end{corollary}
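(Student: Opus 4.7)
The plan is to deduce the corollary as a direct consequence of \cref{thm:plane} after base-changing the Mustafin model and the bundle it produces to $\mathfrak{o}$, and then verifying that triviality on each reduced component is enough to satisfy \cref{def:strongred}. Since all hypotheses of \cref{thm:plane} are in force, we obtain a vector bundle $\cE$ on $\mathfrak{C}^{\underline{a}}$ with generic fiber $E$ whose restriction to every reduced irreducible component $D_i\cong\mathbb{P}^1_k$ of the star-like special fiber $\mathfrak{C}^{\underline{a}}_k$ is trivial.

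Because $K\subset\overline{\mathbb{Q}}_p$, the ring of integers $R$ embeds into $\overline{\mathbb{Z}}_p$. The scheme $\mathfrak{C}:=\mathfrak{C}^{\underline{a}}\otimes_R \overline{\mathbb{Z}}_p$ is then a finitely presented, flat, proper $\overline{\mathbb{Z}}_p$-scheme with generic fiber $C_{\overline{\mathbb{Q}}_p}$, so it is a model of $C_{\overline{\mathbb{Q}}_p}$ in the sense of \cref{sec:parallel}. The further base change $\mathfrak{C}_{\mathfrak{o}}=\mathfrak{C}\otimes_{\overline{\mathbb{Z}}_p}\mathfrak{o}$ has generic fiber $C_{\mathbb{C}_p}$, and the pullback $\mathcal{F}$ of $\cE$ to $\mathfrak{C}_{\mathfrak{o}}$ is a vector bundle on $\mathfrak{C}_{\mathfrak{o}}$ whose generic fiber is $E_{\mathbb{C}_p}$.

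It remains to check that the special fiber $\mathcal{F}_{\overline{\mathbb{F}}_p}$ is strongly semistable of degree zero in the sense of \cref{def:strong}. Since $D_i\cong\mathbb{P}^1_k$ is geometrically integral, the reduced irreducible components of $\mathfrak{C}^{\underline{a}}_k\otimes_k\overline{\mathbb{F}}_p$ are the base changes $D_i\otimes_k\overline{\mathbb{F}}_p\cong\mathbb{P}^1_{\overline{\mathbb{F}}_p}$, and these are smooth, hence agree with their own normalizations. The restriction of $\mathcal{F}_{\overline{\mathbb{F}}_p}$ to each such component is the base change of the trivial bundle $\mathcal{O}_{D_i}^n$ given by \cref{thm:plane}, namely $\mathcal{O}_{\mathbb{P}^1_{\overline{\mathbb{F}}_p}}^n$. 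A trivial bundle on $\mathbb{P}^1$ has degree zero, is semistable (its Grothendieck summands are all equal), and is preserved by every Frobenius pullback, so it is strongly semistable of degree zero. Thus \cref{def:strong} and hence \cref{def:strongred} are satisfied, and $E_{\mathbb{C}_p}$ has strongly semistable reduction of degree zero. The corollary is essentially a translation of \cref{thm:plane} into the language of $p$-adic parallel transport; the only step requiring attention is the routine verification of base-change compatibility for model and bundle, and there is no serious obstacle to overcome.
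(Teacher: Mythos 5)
Your proposal is correct and follows exactly the route the paper intends: the paper offers no written proof beyond the remark that \cref{thm:plane} "immediately implies" the corollary, and your argument simply makes the implicit base change to $\overline{\mathbb{Z}}_p$ and $\mathfrak{o}$ explicit and checks that a trivial bundle on $\mathbb{P}^1_{\overline{\mathbb{F}}_p}$ is strongly semistable of degree zero. No gaps.
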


\subsection{An example class}
In this subsection, we give a class of examples of $(\underline{d},\underline{a})-$admissible polynomials. In order to this, we fix $d_1,\dots,d_{n+1} \leq \rho$ with $\sum d_i=n\rho$ and homogeneous polynomials $h_1,\dots,h_{n+1}\in R[x_1,x_2,x_3]$ with $\mathrm{deg}(h_i)=d_i$.\par 
In particular, this yields that $\sum_{j\neq i} (\rho-d_j)=d_i$. Let $m\in R[x_1,x_2,x_3]$ be a monomial of degree $d_i$. We can then factor $m=m_1\cdots m_{i-1}m_{i+1}\cdots m_{n+1}$, where $m_j$ is a monomial of degree $\rho-d_j$. For each monomial $m$, we fix such a factorisation and map $m$ to the product $m_1\cdots m_{i-1} m_{i+1}\cdots m_{n+1}$.  By linear continuation, we obtain an injective morphism of $R$-modules

\[R[x_1,x_2,x_3]^{(d_i)}\hookrightarrow \mathrm{Sym}_{j\neq i}R[x_1,x_2,x_3]^{(\rho-d_j)}.\]
Composing with  the isomorphisms
\[
R[x_1,x_2,x_3]^{(\rho-d_j)}\xrightarrow{} R[x_{1j},x_{2j},x_{3j}]^{(\rho-d_j)}
\]
mapping $x_i$ to $x_{ij}$ yields the following injective map
\[R[x_1,x_2,x_3]^{(d_i)}\hookrightarrow\mathrm{Sym}_{j\neq i}R[x_{1j},x_{2j},x_{3j}]^{(\rho-d_j)}.\]

We denote by $\tilde{F}_i$ the image of $h_i$ in $\mathrm{Sym}_{j\neq i}R[x_{1j},x_{2j},x_{3j}]^{(\rho-d_j)}$ and define
\begin{equation}
\label{equ:pushfor}
F_i=\tilde{F}_i\left(g_1
\begin{pmatrix}
x_{11}\\
x_{21}\\
x_{31}
\end{pmatrix},\dots,
g_{n+1}
\begin{pmatrix}
x_{1\ n+1}\\
x_{2\ n+1}\\
x_{3\ n+1}
\end{pmatrix}
\right).
\end{equation}
We illustrate this in the following example.
\begin{example}
For $\rho=n+1$, we consider the tuple 
\begin{equation}
(h_1,\dots,h_{n+1})=(x_1^n,x_2^n,x_3^2,x_1x_2^{n-1}x_3,\dots,x_1^{n-2}x_2^2x_3).
\end{equation}
We observe that $(n+1-d_1,\dots,n+1-d_{n+1})=(1,1,n-1,0,\dots,0)$. Therefore, we may consider the factorisation
\begin{equation}
(h_1,\dots,h_{n+1})=(x_1(x_1^{n-1}),x_2(x_2^{n-1}),x_3x_3,x_1(x_2^{n-1})x_3,\dots,x_1(x_{1}^{n-3}x_{2}^2)x_3),
\end{equation}
which then yields the desired expression in $x_{ij}$
\begin{align}
\tilde{F}_1=x_{12}x_{13}^{n-1},\quad \tilde{F}_2=x_{21}x_{23}^{n-1},\quad \tilde{F}_3=x_{31}x_{32},\\
\tilde{F}_i=x_{12}\left(x_{13}^{i-4}x_{23}^{n+3-i}\right) x_{31}\quad\textrm{for}\quad i=4,\dots,n+1.
\end{align}
\end{example}
\vspace{15pt}

In the following proposition, we give a large class of examples which are $(\underline{d},\underline{a})-$admissible.

\begin{proposition}
\label{prop:ex}
Let $\underline{d}$ be an $(n+1)$- tuple of degrees with $\sum d_i=n\rho$ as above, and put
\begin{equation} f_i=\Upsilon_i^{\underline{a}}\left(\prod_{j\neq i} x_{3j}^{\rho-d_j}\right)\in K[x_1,x_2,x_3]
\end{equation}
for some choice of $\underline{a}$.
Further, let $h_1,\dots,h_{n+1}\in R[x_1,x_2,x_3]$ be any choice of homogeneous polynomials with $\mathrm{deg}(h_i)=d_i$. Then, the tuple $(f_1+h_1,\dots,f_{n+1}+h_{n+1})$ is $(\underline{d},\underline{a})-$admissible.
\end{proposition}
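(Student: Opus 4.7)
The plan is to exhibit, for each index $i$, a polynomial $\widehat F_i$ in $\mathrm{Sym}_{j \neq i} R[x_{1j}, x_{2j}, x_{3j}]^{(\rho - d_j)}$ which lies in $\Sigma_i$ and satisfies $\Upsilon_i^{\underline a}(\widehat F_i) = f_i + h_i$. Since $\Upsilon_i^{\underline a}$ is $R$-linear, I would lift $f_i$ and $h_i$ separately: the monomial $\prod_{j \neq i} x_{3j}^{\rho - d_j}$ is a lift of $f_i$ by the very definition of $f_i$, while applying the construction \eqref{equ:pushfor} to $h_i$ produces $H_i = \prod_{j \neq i} m_j(g_j \vec x^{(j)})$, where $h_i = \prod_{j \neq i} m_j$ is a chosen factorisation into monomials with $\deg m_j = \rho - d_j$ and $\vec x^{(j)} = (x_{1j}, x_{2j}, x_{3j})^T$. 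I would then take
\[ \widehat F_i := \prod_{j \neq i} x_{3j}^{\rho - d_j} + H_i \]
and verify $\Upsilon_i^{\underline a}(\widehat F_i) = f_i + h_i$: the defining substitution $\vec x^{(j)} \mapsto g_j^{-1} \vec x$ of $\Upsilon_i^{\underline a}$ turns each $g_j \vec x^{(j)}$ back into $\vec x$, so $\Upsilon_i^{\underline a}(H_i) = \prod_{j \neq i} m_j(\vec x) = h_i$, and the first summand maps to $f_i$ by construction.

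The main step is to check $\widehat F_i \in \Sigma_i$. Using $g_j = M_j \cdot \mathrm{diag}(1, t, t^2)$, I would compute
\[ (g_j \vec x^{(j)})_l \;=\; (M_j)_{l1} x_{1j} + t(M_j)_{l2} x_{2j} + t^2 (M_j)_{l3} x_{3j} \;\equiv\; \overline{(M_j)_{l1}}\, x_{1j} \pmod{\mathfrak{m}}, \]
so that every factor $m_j(g_j \vec x^{(j)})$ reduces modulo $\mathfrak{m}$ to a scalar multiple of $x_{1j}^{\rho - d_j}$. This gives $\overline{H_i} = C \prod_{j \neq i} x_{1j}^{\rho - d_j}$ for some $C \in k$; when $d_i > 0$ at least one exponent $\rho - d_j$ is positive, so $\overline{H_i} \in \langle (x_{1j})_{j \neq i}\rangle$. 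On the other hand, $\prod_{j \neq i} x_{3j}^{\rho - d_j}$ has $t$-free coefficients and involves only the variables $x_{3j}$, so $\overline{\widehat F_i}$ is nonzero and not contained in $\langle (x_{1j}, x_{2j})_j\rangle$. In particular, $\widehat F_i$ is already saturated with respect to $t$ and therefore lies in $\Sigma_i$.

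The argument is essentially a direct computation, with no deep obstacle. The only nontrivial ingredient is the mod-$\mathfrak{m}$ reduction of $H_i$, which hinges on the diagonal factor $\mathrm{diag}(1, t, t^2)$ built into $g_j$ forcing each $(g_j \vec x^{(j)})_l$ to be divisible by $x_{1j}$ modulo $\mathfrak{m}$; the auxiliary summand $\prod x_{3j}^{\rho - d_j}$ is chosen precisely to provide a surviving mod-$\mathfrak{m}$ term outside the ideal $\langle (x_{1j}, x_{2j})_j\rangle$. The degenerate case $d_i = 0$ (which forces $d_j = \rho$ for $j \neq i$, so that $f_i = 1$ and $H_i = h_i$) only requires noting that $\widehat F_i = 1 + h_i$ is a constant in $R$ whose saturation, when nonzero, is a unit and so still meets the $\Sigma_i$-condition.
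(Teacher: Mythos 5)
Your proof is correct and takes essentially the same route as the paper's: the paper also lifts $f_i+h_i$ to $\prod_{j\neq i}x_{3j}^{\rho-d_j}+F_i$ with $F_i$ as in \cref{equ:pushfor} and then asserts membership in $\Sigma_i$ ``by the nature of the matrices $g_l$''. Your mod-$\mathfrak{m}$ computation, showing that the diagonal factor $\mathrm{diag}(1,t,t^2)$ forces $\overline{F_i}$ into $\langle (x_{1j})_{j\neq i}\rangle$ while the summand $\prod_{j\neq i}x_{3j}^{\rho-d_j}$ survives outside $\langle (x_{1j},x_{2j})_j\rangle$, is precisely the detail the paper leaves implicit.
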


\begin{proof}
The proposition follows by observing that for $F_i$ as in \cref{equ:pushfor}, we have
\begin{equation}
\Upsilon_i^{\underline{a}}(\prod_{j\neq i} x_{3j}^{\rho-d_j}+F_i)=f_i+h_i
\end{equation}
and $\prod_{j\neq i} x_{3j}^{\rho-d_j}+F_i\in \Sigma_i$ by the nature of the matrices $g_l$. 
\end{proof}

\section{A concrete example on the Fermat curve}
\label{sec:brenner}

In this section, we prove that the bundle $\mathrm{Syz}(x_1^2,x_2^2,x_3^2)(3)$ on the Fermat curve $C=V(x_1^d+x_2^d+x_3^d)$  has potentially strongly semistable reduction in the sense of \cref{def:potstrongred}. This example is a concrete stable rank two and degree zero bundle for which the obvious reduction is not strongly semistable by \cite[Proposition 1 and Section 3]{br2}.  Hence it provides an obvious test case for the question if all semistable bundles of degree zero occur in a $p$-adic Simpson correspondence, as we have explained in the introduction. The strategy is as follows: We construct a finite covering $\alpha:C'\to C$, such that $\alpha^\ast\left(\mathrm{Syz}(x_1^2,x_2^2,x_3^2)(3)\right)$ has a model of the kind investigated in \cref{sec:plane}.

\vspace{\baselineskip}

We assume that the characteristic of $K$ is zero. Fix coefficients $\underline{a} \in \Delta(R) \cap R^{27}$ and write
\begin{equation}
M_l^{-1}=\begin{pmatrix}
a_{11}^{(l)} & a_{12}^{(l)} & a_{13}^{(l)}\\
a_{21}^{(l)} & a_{22}^{(l)} & a_{23}^{(l)}\\
a_{31}^{(l)} & a_{32}^{(l)} & a_{33}^{(l)}
\end{pmatrix}^{-1}
=\begin{pmatrix}
b_{11}^{(l)} & b_{12}^{(l)} & b_{13}^{(l)}\\
b_{21}^{(l)} & b_{22}^{(l)} & b_{23}^{(l)}\\
b_{31}^{(l)} & b_{32}^{(l)} & b_{33}^{(l)}
\end{pmatrix}
=B_l.
\end{equation}

We further observe that
\begin{align}
\tilde{P}_{ij;\underline{a}}\coloneqq&\Upsilon_l^{\underline{a}}(x_{3i}x_{3j})=t^{-4}(b_{31}^{(i)}x_{1}+b_{32}^{(i)}x_2+b_{33}^{(i)}x_3)(b_{31}^{(j)}x_{1}+b_{32}^{(j)}x_2+b_{33}^{(j)}x_3)\\
=&t^{-4}\left(b_{31}^{(i)}b_{31}^{(j)}x_{1}^2+b_{32}^{(i)}b_{32}^{(j)}x_2^2+b_{33}^{(i)}b_{33}^{(j)}x_3^2+(b_{31}^{(i)}b_{32}^{(j)}+b_{32}^{(i)}b_{31}^{(j)})x_1x_2+(b_{31}^{(i)}b_{33}^{(j)}\right.\\
&\left.+b_{33}^{(i)}b_{31}^{(j)})x_1x_3+(b_{32}^{(i)}b_{33}^{(j)}+b_{33}^{(i)}b_{32}^{(j)})x_2x_3\right)
\end{align}

and define

\begin{align}
\tilde{P}_{l;\underline{a}}=b_{11}^{(l)}x_1^2+b_{12}^{(l)}x_1x_2+b_{13}^{(l)}x_2^2+b_{21}^{(l)}x_2x_3+b_{22}^{(l)}x_3^2+b_{23}^{(l)}x_1x_3.
\end{align}

Finally, we define

\begin{equation}
P_{1;\underline{a}}=\tilde{P}_{23;\underline{a}}+t^4\tilde{P}_{1;\underline{a}},\quad P_{2;\underline{a}}=\tilde{P}_{13;\underline{a}}+t^4\tilde{P}_{2;\underline{a}},\quad
P_3=\tilde{P}_{12}+t^4\tilde{P}_3.
\end{equation}

Then, it is easy to see that for generic choices of $b_{ij}^{(l)}$ (which yield generic
choices of $a_{ij}^{(l)}$), we have $D_+(P_{1;\underline{a}})\cup D_+(P_{2;\underline{a}})\cup D_+(P_{3;\underline{a}})=\mathbb{P}^2_K$. Thus, we obtain a finite covering
\begin{align}
\alpha:\mathbb{P}^2_K&\to\mathbb{P}^2_K\\
\begin{pmatrix}
x_1\\
x_2\\
x_3
\end{pmatrix} 
&\mapsto
\begin{pmatrix}
P_{1;\underline{a}}(x_1,x_2,x_3)\\
P_{2;\underline{a}}(x_1,x_2,x_3)\\
P_{3;\underline{a}}(x_1,x_2,x_3)
\end{pmatrix}.
\end{align}

This restricts to a finite covering
\begin{equation}
\restr{\alpha}{C'_{\underline{a}}}: C'_{\underline{a}}\to C
\end{equation}
with $C'_{\underline{a}}=V(P_{1;\underline{a}}^d+P_{2;\underline{a}}^d+P_{3;\underline{a}}^d)$. The Jacobi criterion shows that smoothness of $V(P_{1;\underline{a}}^d+P_{2;\underline{a}}^d+P_{3;\underline{a}}^d)$ holds for generic choices of $\underline{b}$ and thus for generic choices of $\underline{a}$.  Therefore, as noted in \cref{rem:general}, we find that possibly after finite base change $C'_{\underline{a}}$ is a smooth irreducible curve  for general choices of $\underline{a}$. 

Moreover, we obtain
\begin{equation}
\alpha^\ast\left(\mathrm{Syz}(x_1^2,x_2^2,x_3^2)(3)\right)=\mathrm{Syz}(P_{1;\underline{a}}^2,P_{2;\underline{a}}^2,P_{3;\underline{a}}^2)(6).
\end{equation}

We observe that by \cref{prop:ex}, we have $P_{l;\underline{a}}^2\in\Upsilon_i^{\underline{a}}(\Sigma_i)$, as $P_{l;\underline{a}}^2=\tilde{P}_{ij;\underline{a}}^2+2t^4\tilde{P}_{ij;\underline{a}}\tilde{P}_{l;\underline{a}}+t^8\tilde{P}_{l;\underline{a}}^2$ and
\begin{equation}
\tilde{P}_{ij;\underline{a}}^2=\Upsilon_i^{\underline{a}}(x_{3i}^2x_{3j}^2)\quad \textrm{and} \quad 2t^4\tilde{P}_{ij;\underline{a}}\tilde{P}_{l;\underline{a}}+t^8\tilde{P}_{l;\underline{a}}^2\in R[x_1,x_2,x_3].
\end{equation}

We now want to apply \cref{thm:plane} to $\mathrm{Syz}(P_{1;\underline{a}}^2,P_{2;\underline{a}}^2,P_{3;\underline{a}}^2)(6)$ on $C'_{\underline{a}}$. What is left to show is that there exists a choice of coefficients $\underline{a}$, such that Mustafin model $\mathfrak{C}'^{\underline{a}}$ of $C'_{\underline{a}}$ has star-like reduction. The key problem here is that $C'_{\underline{a}}$ depends on the choice of $\underline{a}$. We resolve this problem in the proof of the following theorem.

\begin{theorem}
\label{thm:bren} Let $K$ be a discretely valued field of characteristic zero and consider the Fermat curve $C=V(x_1^d+x_2^d+x_3^d) \subset \mathbb{P}_K^2$  and the  vector bundle $E = \mathrm{Syz}(x_1^2,x_2^2,x_3^2)(3)|_C$. Then there exists a finite extension $L$ of $K$ and a degree two cover $\alpha: C' \rightarrow C_L$ by a connected, smooth, projective curve $C'$ together with a model $\mathfrak{C}'$ of $C'$ over $R_L$ and a vector bundle $\cE$ on $\mathfrak{C}'$ with generic fiber $\alpha^\ast E_L$ such that the special fiber of $\cE$ is trivial on all reduced irreducible components of $\mathfrak{C}_k'$.

\end{theorem}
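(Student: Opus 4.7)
The plan is to combine the explicit construction of the cover $\alpha: C'_{\underline{a}} \to C$ introduced just above the theorem with the model-theoretic statement of \cref{thm:plane}. The admissibility of $(P_{1;\underline{a}}^2, P_{2;\underline{a}}^2, P_{3;\underline{a}}^2)$ in the sense of \cref{sec:plane} has already been verified in the preceding discussion, and the identity $\alpha^\ast E \cong \mathrm{Syz}(P_{1;\underline{a}}^2, P_{2;\underline{a}}^2, P_{3;\underline{a}}^2)(6)$ is immediate. What remains is to produce, after a finite extension $L/K$, a choice of $\underline{a} \in \Delta(R_L) \cap R_L^{27}$ such that simultaneously $C'_{\underline{a}}$ is a smooth irreducible plane curve and its Mustafin model $(\mathfrak{C}')^{\underline{a}}$ has star-like reduction. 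The main difficulty is that $C'_{\underline{a}}$ and the Mustafin data $\mathcal{M}_{\underline{a}}(\Gamma)$ both depend on the same parameter $\underline{a}$, so \cref{thm:spec} cannot be invoked as a black box; instead one has to imitate the universal argument of \cref{lem:step2}.

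The first step is to lift the construction of $P_{l;\underline{a}}$ to the universal setting of \cref{lem:step2}. Working over $\mathcal{R} = R[(A_{ij}^{(l)})]$ and using the entries $B_{ij}^{(l)}$ of the inverse universal matrices, one forms universal polynomials $\mathcal{P}_l \in \mathcal{K}[y_1,y_2,y_3]$ following the same recipe, and the universal cover $\mathcal{C}' \subset \mathbb{P}^2_{\mathcal{K}}$ cut out by $\mathcal{P}_1^d + \mathcal{P}_2^d + \mathcal{P}_3^d$. A direct inspection of the formula $P_{l;\underline{a}} = \tilde{P}_{\{1,2,3\}\setminus\{l\};\underline{a}} + t^4 \tilde{P}_{l;\underline{a}}$ shows that for every $\sigma \in S_3$ the ring isomorphism $\tau_\sigma$ of \cref{equ:morperm} sends $\mathcal{P}_l$ to $\mathcal{P}_{\sigma(l)}$; hence $\sum \mathcal{P}_l^d$ is $\tau_\sigma$-invariant and $\tau_\sigma(I(\mathcal{C}')) = I(\mathcal{C}')$ for all $\sigma$. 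This is precisely the symmetry hypothesis required to apply \cref{lem:step2}.

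The second step is to check the remaining hypothesis of \cref{lem:step2}, namely that $\Lambda_{\underline{a}}^\ast \mathscr{C}' = C'_{\underline{a}}$ is reduced and irreducible for general $\underline{a}$. The Jacobi criterion applied to $\mathcal{P}_1^d + \mathcal{P}_2^d + \mathcal{P}_3^d$ cuts out a non-empty Zariski open subset of coefficient tuples for which $C'_{\underline{a}}$ is smooth; smoothness forces geometric reducedness and, for a smooth projective plane curve, irreducibility and connectedness. Applying \cref{lem:step2} then yields a non-empty open subset $W \subset \mathrm{Spec}(\mathcal{R}')$ such that for every $y \in W(k)$ and every lift $\underline{a}$ of $y$, all irreducible components of the Mustafin model $(\mathfrak{C}')^{\underline{a}}_k$ lie in primary components of $\mathcal{M}_{\underline{a}}(\Gamma)_k$. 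By \cref{lem:step1}, the model $(\mathfrak{C}')^{\underline{a}}$ then has star-like reduction for a general choice of $\underline{a}$, after replacing $K$ by a finite extension $L$ so that the relevant open subset of $\Delta$ has $R_L$-points reducing into the intersection of $W$ with the loci of smoothness.

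The final step is to invoke \cref{thm:plane} with $n = 2$, $\rho = 3$, $\underline{d} = (2,2,2)$, and the triple $(P_{1;\underline{a}}^2, P_{2;\underline{a}}^2, P_{3;\underline{a}}^2)$, noting that $D_+(P_{1;\underline{a}}^2) \cup D_+(P_{2;\underline{a}}^2) \cup D_+(P_{3;\underline{a}}^2) = \mathbb{P}^2_L$ contains $C'_{\underline{a}}$ automatically for generic $\underline{a}$. The theorem furnishes a vector bundle $\mathcal{E}$ on $\mathfrak{C}' := (\mathfrak{C}')^{\underline{a}}$ with generic fiber $\alpha^\ast E_L \cong \mathrm{Syz}(P_{1;\underline{a}}^2, P_{2;\underline{a}}^2, P_{3;\underline{a}}^2)(6)|_{C'_{\underline{a}}}$ whose restriction to every reduced irreducible component of the special fiber is the trivial bundle on $\mathbb{P}^1_k$, completing the proof. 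The hard part, as already indicated, is the simultaneous dependence of the cover and of the Mustafin data on $\underline{a}$; the symmetry $\tau_\sigma(\mathcal{P}_l) = \mathcal{P}_{\sigma(l)}$ is precisely what makes the universal argument of \cref{lem:step2} applicable in this self-referential situation.
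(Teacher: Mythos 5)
Your overall architecture is the same as the paper's: lift everything to the universal setting over $\mathcal{R}$, verify the $\tau_\sigma$-symmetry of the universal cover, apply \cref{lem:step2} to get all components of $(\mathfrak{C}')^{\underline{a}}_k$ into primary components, deduce star-like reduction, and finish with \cref{thm:plane}. Your symmetry check $\tau_\sigma(\mathcal{P}_l)=\mathcal{P}_{\sigma(l)}$ and your verification of the reduced/irreducible hypothesis of \cref{lem:step2} via the Jacobi criterion both match the paper. However, there is a genuine gap at the point where you write ``By \cref{lem:step1}, the model $(\mathfrak{C}')^{\underline{a}}$ then has star-like reduction.'' \Cref{lem:step1} is stated and proved for a curve $C=V(f)$ with $f\in K[x_1,x_2,x_3]$ \emph{fixed independently of} $\underline{a}$: its proof hinges on the computation that the one-factor model $(\mathfrak{C}^{\underline{a}})^{(i)}$, obtained by embedding $C$ into a single $\mathbb{P}(L)$ via $g_i^{-1}$, has special fiber cut out by $\tilde F=h(\overline{a}^{(i)}_{11},\overline{a}^{(i)}_{21},\overline{a}^{(i)}_{31})\,x_1^{d}$ with $h$ generically nonzero. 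That nonvanishing is automatic when the coefficients of $f$ and the entries of $g_i$ are independent parameters, but in the present situation $f=P_{1;\underline{a}}^d+P_{2;\underline{a}}^d+P_{3;\underline{a}}^d$ involves the \emph{same} coefficients $\underline{a}$ (through the inverse matrices $B_l$) as the embedding, so cancellations in the leading coefficient of $x_1^{2d}$ are a priori possible and the conclusion of \cref{lem:step1} cannot be invoked as a black box. You correctly flag this self-referentiality for \cref{thm:spec} and for the \cref{lem:step2} step, but the same issue infects the \cref{lem:step1} step and you do not address it there.

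The paper closes exactly this gap with an explicit computation: writing $t^{4d}S^{(l)}_{\underline{a}}$ for the substituted defining polynomial, it uses Cramer's rule to express the $a^{(1)}_{j1}$ in terms of the $b^{(1)}_{ij}$ and then exhibits a specific monomial, namely the one in \cref{equ:monomial}, which occurs exactly once in $\overline{\det(B_1)}\cdot\overline{t^{4d}S^{(1)}_{\underline{a}}}$ (it can only arise from the $(t^4\tilde P_{23})^d$ term, by degree bookkeeping in the $b^{(1)}_{ij}$ versus the $b^{(2)}_{ij},b^{(3)}_{ij}$) and hence cannot cancel. This shows $\overline{t^{4d}S^{(1)}_{\underline{a}}}=\gamma(\overline{\underline{b}})\,x_1^{2d}$ with $\gamma\not\equiv 0$, so the one-factor special fiber is $\mathrm{Proj}(k[x_1,x_2,x_3]/\langle x_1^{2d}\rangle)$ for general $\underline{a}$, after which the Chow-class argument of \cref{lem:step1} goes through verbatim. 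To complete your proposal you would need to supply this (or an equivalent) nonvanishing argument; without it, the deduction of star-like reduction from ``all components lie in primary components'' is unjustified in this $\underline{a}$-dependent setting.
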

This immediately  implies the following result. 
\begin{corollary}
The vector bundle $\mathrm{Syz}(x_1^2,x_2^2,x_3^2)(3)$ on the Fermat curve over $\mathbb{C}_p$ has potentially strongly semistable reduction in the sense of \cref{def:potstrongred}.
\end{corollary}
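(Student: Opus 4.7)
The plan is to apply \cref{thm:plane} to the cover $\alpha\colon C'_{\underline{a}}\to C$ constructed in the preamble, for a suitable choice of $\underline{a}$ after possibly passing to a finite extension $L$ of $K$. The preamble already verifies that the tuple $(P_{1;\underline{a}}^2,P_{2;\underline{a}}^2,P_{3;\underline{a}}^2)$ is $(\underline{d},\underline{a})$-admissible with $\underline{d}=(4,4,4)$ and $\rho=6$ (via \cref{prop:ex}), and that $C'_{\underline{a}}$ is a smooth, connected plane curve contained in $\bigcup_l D_+(P_{l;\underline{a}}^2)=\mathbb{P}^2_L$ for general $\underline{a}$. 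The only missing hypothesis of \cref{thm:plane} is that $\mathfrak{C}'^{\underline{a}}$ has star-like reduction.

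The main obstacle is that \cref{thm:spec} cannot be invoked directly, because the curve $C'_{\underline{a}}$ itself varies with the matrix data $\underline{a}$. I would circumvent this by imitating the universal construction used in the proofs of \cref{lem:step2} and \cref{thm:spec}: treat the $A_{ij}^{(l)}$ as formal parameters, work over $\mathcal{R}=R[A_{ij}^{(l)}]$ with fraction field $\mathcal{K}$, and form universal polynomials $P_{l;A}$ by the same formulas as in the preamble, where the entries of $B_l=M_l^{-1}$ are interpreted as rational functions in the $A_{ij}^{(l)}$. Setting
\begin{equation*}
\mathcal{C}'=V\bigl(P_{1;A}^{d}+P_{2;A}^{d}+P_{3;A}^{d}\bigr)\subset\mathbb{P}(\mathcal{V}),
\end{equation*}
the existence of at least one specialization of $\underline{a}$ for which the resulting curve is smooth and irreducible forces the universal defining polynomial to be irreducible over $\mathcal{K}$. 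Let $\mathscr{C}'$ denote the closure in $\mathcal{N}(\Gamma')$ of the image of $\mathcal{C}'$ under the map \cref{equ:mustemb}, so that $\Lambda_{\underline{a}}^{\ast}\mathscr{C}'\cong\mathfrak{C}'^{\underline{a}}$ for general $\underline{a}$.

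The key observation is that $P_{1;A}^{d}+P_{2;A}^{d}+P_{3;A}^{d}$ is manifestly invariant under the $S_3$-action $\tau_\sigma$ permuting the triples $A^{(l)}$: the construction of $P_{l;A}$ involves only the single matrix $M_l$ (and its inverse $B_l$), so $\tau_\sigma(P_{l;A})=P_{\sigma(l);A}$, and the three summands are merely permuted. Hence $\tau_\sigma(I(\mathcal{C}'))=I(\mathcal{C}')$ for every $\sigma\in S_3$, so part (2) of \cref{lem:step2} yields a dense open subset $W\subset\mathrm{Spec}(\mathcal{R}')$ such that for every $y\in W(k)$ and each lift $\underline{a}$ of $y$, all irreducible components of the special fiber of $\mathfrak{C}'^{\underline{a}}$ lie in primary components of $\mathcal{M}_{\underline{a}}(\Gamma)_k$. \cref{lem:step1} then promotes this inclusion to star-like reduction.

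Finally, after enlarging $K$ to a finite extension $L$ whose residue field contains a rational point of $W$ lying in the (non-empty) open locus where $C'_{\underline{a}}$ is smooth and irreducible, we pick a lift $\underline{a}\in\Delta(R_L)\cap R_L^{27}$ satisfying both conditions. Then \cref{thm:plane}, applied to $\mathfrak{C}'^{\underline{a}}$ together with the admissible tuple $(P_{1;\underline{a}}^2,P_{2;\underline{a}}^2,P_{3;\underline{a}}^2)$, produces a vector bundle $\cE$ on $\mathfrak{C}'^{\underline{a}}$ with generic fiber $\alpha^{\ast} E_L$ whose restriction to each reduced irreducible component of the special fiber is trivial, as required. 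The hardest step is the universal/symmetry argument of the previous two paragraphs; once the $\tau_\sigma$-invariance of $\mathcal{C}'$ is exploited, everything else is a direct assembly of results already established in the paper.
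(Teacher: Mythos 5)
Your overall strategy coincides with the paper's: pull back along $\alpha$, verify admissibility via \cref{prop:ex}, run the universal construction over $\mathcal{R}=R[A_{ij}^{(l)}]$, exploit the $\tau_\sigma$-invariance of the universal curve to apply \cref{lem:step2}, and finish with \cref{thm:plane}. However, there is a genuine gap at the point where you write that ``\cref{lem:step1} then promotes this inclusion to star-like reduction.'' \cref{lem:step1} is stated and proved for a curve $C=V(f)$ whose defining polynomial $f$ is \emph{fixed}, independent of $\underline{a}$: the decisive computation there is that the reduction of $f(g_i\cdot x)$ modulo $\mathfrak{m}$ equals $h(\overline{a}_{11}^{(i)},\overline{a}_{21}^{(i)},\overline{a}_{31}^{(i)})\,x_1^{d}$ with $h$ a polynomial that is nonzero for generic $\underline{a}$, and this uses that the coefficients of $f$ do not themselves depend on $\underline{a}$. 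In the present situation the defining polynomial $P_{1;\underline{a}}^{d}+P_{2;\underline{a}}^{d}+P_{3;\underline{a}}^{d}$ of $C'_{\underline{a}}$ is built from the entries of the $B_l=M_l^{-1}$ and contains the factor $t^{-4}$ inside each $\tilde{P}_{ij;\underline{a}}$, so when one substitutes $g_l\cdot x$ back in, there is a real possibility of systematic cancellation in the reduction. One must verify that $t^{4d}S^{(l)}_{\underline{a}}$ lies in $R[x_1,x_2,x_3]$ and that its reduction is a \emph{nonzero} multiple of $x_1^{2d}$ as a function of $\overline{\underline{b}}$; the paper does this by Cramer's rule and by exhibiting an explicit monomial, namely
\begin{equation*}
\bigl(\overline{b}_{31}^{(2)}\bigr)^{d}\bigl(\overline{b}_{31}^{(3)}\bigr)^{d}\bigl(\overline{b}_{22}^{(1)}\bigr)^{2d}\bigl(\overline{b}_{33}^{(1)}\bigr)^{2d}x_1^{2d},
\end{equation*}
which occurs exactly once and hence cannot cancel. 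This is the hardest computational step of the argument, it is not covered by \cref{lem:step1} as stated, and your proposal omits it entirely.

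A second, more minor point: your justification for $\tau_\sigma(P_{l;A})=P_{\sigma(l);A}$ is incorrect as written. You claim $P_{l;A}$ ``involves only the single matrix $M_l$,'' but $P_{l;A}=\tilde{\mathcal{P}}_{ij}+t^4\tilde{\mathcal{P}}_l$ with $\{i,j\}=\{1,2,3\}\setminus\{l\}$ involves all three matrices, since $\tilde{\mathcal{P}}_{ij}$ is built from $\mathcal{B}_i$ and $\mathcal{B}_j$. The equivariance $\tau_\sigma(P_{l;A})=P_{\sigma(l);A}$ does hold, because $\tau_\sigma$ sends $\mathcal{B}_l\mapsto\mathcal{B}_{\sigma(l)}$, hence $\tilde{\mathcal{P}}_{ij}\mapsto\tilde{\mathcal{P}}_{\sigma(i)\sigma(j)}$ and $\tilde{\mathcal{P}}_l\mapsto\tilde{\mathcal{P}}_{\sigma(l)}$, and $\{\sigma(i),\sigma(j)\}$ is the complement of $\{\sigma(l)\}$; so the conclusion survives, but you should state the correct reason. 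With the missing reduction computation supplied and this justification repaired, your argument assembles into the paper's proof.
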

\begin{proof}
We investigate the pullback of $\mathrm{Syz}(P_{1;\underline{a}}^2,P_{2;\underline{a}}^2,P_{3;\underline{a}}^2)(6)$ to $C'_{\underline{a}}$. \par 
We have already seen that $(P_{1;\underline{a}}^2,P_{2;\underline{a}}^2,P_{3;\underline{a}}^2)$ is $((4,4,4),\underline{a})-$admissible. Thus, the fact that $\mathrm{Syz}(P_{1;\underline{a}}^2,P_{2;\underline{a}}^2,P_{3;\underline{a}}^2)(6)$ on $C'_{\underline{a}}$ has (after a finite base extension) a model as in our claim follows from \cref{thm:plane}, if we can choose $\underline{a}$ so that  additionally $\mathfrak{C}'^{\underline{a}}$ has star-like reduction. More precisely, considering the embedding of $C'_{\underline{a}}$ into $\mathcal{M}_{\underline{a}}(\Gamma)$, we obtain a model $\mathfrak{C'}^{\underline{a}}$ of $C'_{\underline{a}}$. We show that for general $\underline{a}$ the special fiber $\left(\mathfrak{C'}^{\underline{a}}\right)_k$ decomposes as in \cref{equ:comp}. In order to see this, we proceed similarly as in the proof of \cref{thm:spec}.
\begin{enumerate}[leftmargin=*]
\item We first prove an analog of \cref{lem:step1}, i.e. assuming that for general $\underline{a}$, the irreducible components of $\mathfrak{C'}^{\underline{a}}_k$ are only contained in primary components of $\mathcal{M}_{\underline{a}}(\Gamma)_k$, then the model $\mathfrak{C'}^{\underline{a}}$ has star-like reduction for general $\underline{a}$.\par 
In order to see this, we first study the model obtained by embedding the curve into a single projective space $\mathbb{P}(L)$. Let $\mathfrak{C'}^{(l)}_{\underline{a}}$ be the model of $C'_{\underline{a}}$ obtained by embedding $C'_{\underline{a}}$ into $\mathbb{P}(L)$ via $g_l$. Consider the polynomial 
\begin{align}
t^{4d}S^{(l)}_{\underline{a}}&=t^{4d}\left(P_{1;\underline{a}}^d+P_{2;\underline{a}}^d+P_{3;\underline{a}}^d\right)\\
&\left(a_{11}^{(l)}x_1+ta_{12}^{(l)}x_{2}+t^2a_{13}^{(l)}x_{3},a_{21}^{(l)}x_1+ta_{22}^{(l)}x_{2}+t^2a_{23}^{(l)}x_{3},a_{31}^{(l)}x_1+ta_{32}^{(l)}x_{2}+t^2a_{33}^{(l)}x_{3}\right).
\end{align}
We  observe that $t^{4d}S^{(l)}_{\underline{a}}\in R[x_1,x_2,x_3]$. To fix ideas, let us suppose $l=1$. By Cramer's rule, we have
\begin{align}
a_{11}^{(1)}=\frac{1}{\mathrm{det}(B_1)}\left(b_{22}^{(1)}b_{33}^{(1)}-b_{32}^{(1)}b_{23}^{(1)}\right)\\
a_{21}^{(1)}=\frac{1}{\mathrm{det}(B_1)}\left(b_{31}^{(1)}b_{23}^{(1)}-b_{21}^{(1)}b_{33}^{(1)}\right)\\
a_{31}^{(1)}=\frac{1}{\mathrm{det}(B_1)}\left(b_{21}^{(1)}b_{32}^{(1)}-b_{31}^{(1)}b_{22}^{(1)}\right)
\end{align}
After substituting these expressions into $t^{4d}S^{(1)}_{\underline{a}}$, we scan $\overline{\mathrm{det}(B_1)}\cdot\overline{t^4S^{(1)}_{\underline{a}}}$ for the monomial
\begin{equation}
\label{equ:monomial}
\left(\overline{b}_{31}^{(2)}\right)^d\left(\overline{b}_{31}^{(3)}\right)^d\left(\overline{b}_{22}^{(1)}\right)^{2d}\left(\overline{b}_{33}^{(1)}\right)^{2d}x_1^{2d}.
\end{equation}
Since \cref{equ:monomial} is a monomial of degree $d$ in $b_{ij}^{(2)}$ and $b_{ij}^{(3)}$ but of degree $2d$ in $b_{ij}^{(1)}$, it can only occur in the reduction of 
\begin{align}
&\mathrm{det}(B_1)\cdot (t^4\tilde{P}_{23})^d\\
&\left(a_{11}^{(l)}x_1+ta_{12}^{(l)}x_{2}+t^2a_{13}^{(l)}x_{3},a_{21}^{(l)}x_1+ta_{22}^{(l)}x_{2}+t^2a_{23}^{(l)}x_{3},a_{31}^{(l)}x_1+ta_{32}^{(l)}x_{2}+t^2a_{33}^{(l)}x_{3}\right).
\end{align}
A patient term-by-term analysis shows that  \cref{equ:monomial}  only occurs once in this expression and therefore cannot cancel in $\overline{\mathrm{det}(B_1)}\cdot\overline{t^{4d}S^{(1)}_{\underline{a}}}$.
Hence $\overline{\mathrm{det}(B_1)}\cdot\overline{t^{4d}S^{(1)}_{\underline{a}}} = \gamma(\underline{\overline{b}} )x_1^{2d}$ for a non-zero rational function $\gamma$ in the coefficients $\underline{\overline{b}}$. 
 For general choices of $b_{ij}^{(l)}$, we therefore have $\gamma(\underline{\overline{b}})\neq 0$. We proceed in a similar way for $l=2$ and $l=3$. Thus, analogously to the proof of \cref{lem:step1}, we  see that for general choices of $\underline{a}$ the model $\mathfrak{C}'^{(l)}_{\underline{a}}$ is cut out by
$t^{4d}S^{(l)}_{\underline{a}}$,  and that the special fiber is 
\begin{equation}
\left(\mathfrak{C'}^{(l)}_{\underline{a}}\right)_k=\mathrm{Spec}\left(\faktor{k[x_1,x_2,x_3]}{\langle x_1^{2d}\rangle}\right).
\end{equation}
Therefore, we obtain completely analogously to the proof of \cref{lem:step1}, that under the assumption that for general $\underline{a}$ all components of $\left(\mathfrak{C'}^{\underline{a}}\right)_k$ are contained in primary components of $\mathcal{M}_{\underline{a}}(\Gamma)_k$, the models $\mathfrak{C'}^{\underline{a}}$ have star-like reduction for general $\underline{a}$.
\item Now, we show that for general $\underline{a}$ all irreducible components of $\left(\mathfrak{C'}_{\underline{a}}\right)_k$ lie in primary components of $\mathcal{M}_{\underline{a}}(\Gamma)_k$. In order to prove this, we work with the same algebraic set-up as in the proof of \cref{lem:step2}.\par 
Recall the notation 
\begin{equation}
\mathfrak{g}_i=\begin{pmatrix}
A_{11}^{(i)} & A_{12}^{(i)} & A_{13}^{(i)}\\
A_{21}^{(i)} & A_{22}^{(i)} & A_{23}^{(i)}\\
A_{31}^{(i)} & A_{32}^{(i)} & A_{33}^{(i)}
\end{pmatrix}\cdot
\begin{pmatrix}
1 & 0 & 0\\
0 & t & 0\\
0 & 0 & t^2
\end{pmatrix}.
\end{equation}

We further denote
\begin{equation}
\begin{pmatrix}
A_{11}^{(i)} & A_{12}^{(i)} & A_{13}^{(i)}\\
A_{21}^{(i)} & A_{22}^{(i)} & A_{23}^{(i)}\\
A_{31}^{(i)} & A_{32}^{(i)} & A_{33}^{(i)}
\end{pmatrix}^{-1}
=
\begin{pmatrix}
B_{11}^{(i)} & B_{12}^{(i)} & B_{13}^{(i)}\\
B_{21}^{(i)} & B_{22}^{(i)} & B_{23}^{(i)}\\
B_{31}^{(i)} & B_{32}^{(i)} & B_{33}^{(i)}
\end{pmatrix}
=\mathcal{B}_i.
\end{equation}

We now consider
\begin{align}
\tilde{\mathcal{P}}_{ij}\coloneqq & t^{-4}\left(B_{31}^{(i)}B_{31}^{(j)}y_{1}^2+B_{32}^{(i)}B_{32}^{(j)}y_2^2+B_{33}^{(i)}B_{33}^{(j)}y_3^2+(B_{31}^{(i)}B_{32}^{(j)}+B_{32}^{(i)}B_{31}^{(j)}y_1y_2\right.\\
&+(B_{31}^{(i)}B_{33}^{(j)}+B_{33}^{(i)}B_{31}^{(j)})y_1y_3+\left.(B_{32}^{(i)}B_{33}^{(j)}+B_{33}^{(i)}B_{32}^{(j)})y_2y_3\right),\\
\tilde{\mathcal{P}}_{l}= & B_{11}^{(l)}y_1^2+B_{12}^{(l)}y_1y_2+B_{13}^{(l)}y_2^2+B_{21}^{(l)}y_2y_3+B_{22}^{(l)}y_3^2+B_{23}^{(l)}y_1y_3.
\end{align}
and
\begin{equation}
\mathfrak{f}=(\tilde{\mathcal{P}}_{23}+t^4\tilde{\mathcal{P}}_{1})^d+(\tilde{\mathcal{P}}_{13}+t^4\tilde{\mathcal{P}}_{2})^d+(\tilde{\mathcal{P}}_{12}+t^4\tilde{\mathcal{P}}_{3})^d.
\end{equation}
The last equation defines a curve $\mathcal{C}=V(\mathfrak{f})\subset\mathbb{P}(\mathcal{V})$. We take the closure of the image of $\mathcal{C}$ in $\mathcal{N}(\Gamma')$ via \cref{equ:mustemb} and endow it with the reduced scheme structure to obtain a scheme $\mathscr{C}$. As in the previous section, we denote for all  $\underline{a} \in \Delta(R)$ the homomorphism $\lambda_{\underline{a}}: \mathcal{R} \rightarrow R$ mapping $A_{ij}^{(l)}$ to $a_{ij}^{(l)}$ and by  $\Lambda_{\underline{a}}$ the homomorphism obtained by composing $\lambda_{\underline{a}}$ with $R\hookrightarrow K$. The base change of $\mathscr{C}$ along $\Lambda_{\underline{a}}$ is by construction isomorphic to $C'_{\underline{a}}$ for generic $\underline{a}$. Hence $\Lambda_{\underline{a}}^{\ast}\mathcal{C}$ is generically an irreducible and reduced curve, and after possibly passing to a finite field extension, the same condition holds for general $\underline{a}$.  Note that the curve $\mathcal{C}$  is irreducible by \cref{lemma-fibers}.\par
Now, the key observation is that $\tau_{\sigma}(I(\mathcal{C}))=I(\mathcal{C})$. Then, after passing to a finite field extension, we use  \cref{lem:step2} to find an open set $W\subset\mathrm{Spec}(\mathcal{R}')$, such that for all $y \in W(k)$ the irreducible components of $\mathscr{C}'_y$ are contained in primary components of the special fiber of the Mustafin variety $\mathcal{N}'(\Gamma')_y=\mathcal{M}_{\underline{a}}(\Gamma)_k$ for coefficients $\underline{a} $ in $\Delta(R) \cap R^{27} $  lifting $y$. 
\end{enumerate}
Now, it follows immediately from (1) and (2) that for general $\underline{a}$, the model $\mathfrak{C'}^{\underline{a}}$ has star-like reduction. Therefore, the theorem follows with the help of \cref{thm:plane} applied to the bundle $\mathrm{Syz}(P_{1;\underline{a}}^2,P_{2;\underline{a}}^2,P_{3;\underline{a}}^2)(6)$.
\end{proof}

\bibliographystyle{acm}


\end{document}